\documentclass[11pt]{amsart}
\usepackage[margin=3cm]{geometry}               
\usepackage[usenames,dvipsnames,svgnames,table]{xcolor}    
\usepackage[all]{xypic}
\usepackage{graphicx}
\usepackage{tikz}
\usetikzlibrary{decorations.pathreplacing}
\usepackage{lineno, hyperref}
\usepackage{amssymb,amsfonts, amsmath, amsthm}
\usepackage{epstopdf,enumitem}
\usepackage{ulem}
\usepackage{cancel}
\usepackage{thmtools} 
\usepackage{mathrsfs}
\usepackage[english]{babel}
\usepackage{csquotes}

\DeclareGraphicsRule{.tif}{png}{.png}{`convert #1 `dirname #1`/`basename #1 .tif`.png}

\newtheorem*{theorem*}{Theorem}
\newtheorem*{corollary*}{Corollary}

\declaretheorem[style=definition,numberwithin=section]{definition}
\declaretheorem[style=definition,sibling=definition]{remark}

\declaretheorem[style=definition, sibling = definition]{lemma-definition}

\declaretheorem[style=theorem, sibling = definition]{theorem}
\declaretheorem[style=theorem, sibling = definition]{lemma}

\declaretheorem[style=theorem, sibling = definition]{corollary}






\newcommand{\R}{\mathbb R}
\newcommand{\Q}{\mathbb Q}
\newcommand{\N}{\mathbb N}

\newcommand{\res}{\mathrm{res}}

\newcommand{\cU}{\mathcal{U}}
\newcommand{\Div}{\mathrm{div}}
\newcommand{\ov}{\mathrm{ov}}

\newcommand{\alg}{\mathrm{alg}}
\newcommand{\llangle}{\langle\!\langle}
\newcommand{\rrangle}{\rangle\!\rangle}
\newcommand{\mL}{\mathcal L}
\newcommand{\cA}{\mathcal{A}}
\newcommand{\cO}{\mathcal{O}}
\newcommand{\Th}{\mathrm{Th}}
\newcommand{\RCVF}{\mathrm{RCVF}}

\title{Real closed valued fields with analytic structure}

\author[P. Cubides Kovascics]{Pablo Cubides Kovacsics$^{\ast}$}
\address{\hskip-\parindent
Pablo Cubides Kovacsics \\ Laboratoire de math\'ematiques Nicolas Oresme\\ Universit\'e de Caen\\CNRS UMR 6139 
Universit\'e de Caen BP 5186\\
14032 Caen cedex, France. }
\email {pablo.cubides@unicaen.fr}
\thanks{${}^{\ast}$ Supported by the ERC project TOSSIBERG (Grant Agreement 637027) and partially supported by ERC project MOTMELSUM (Grant agreement 615722).}

\author[D. Haskell]{Deirdre Haskell$^{\dag}$}
\address{\hskip-\parindent
Deirdre Haskell \\
Department of Mathematics and Statistics \\
McMaster University \\
1280 Main St W \\
Hamilton ON L8S 4K1 \\
CANADA} 
\email {haskell@math.mcmaster.ca}
\thanks{${}^{\dag}$ Partially supported by NSERC}

\begin{document}

\keywords{Real closed valued fields, separated analytic structure, overconvergent power series, o-minimality, weak o-minimality, $C$-minimality. } 
\subjclass[2010]{Primary 32P05, 32B05, 32B05, 03C10, 03C64; Secondary 14P10}

\maketitle

\begin{abstract} 
We show quantifier elimination theorems for real closed valued fields with separated analytic structure and overconvergent analytic structure in their natural one-sorted languages and deduce that such structures are weakly o-minimal. We also provide a short proof that algebraically closed valued fields with separated analytic structure (in any rank) are $C$-minimal. 

\end{abstract}

%
\normalem

\section{Introduction}

Since the pioneering work of Denef and van den Dries in \cite{DD}, subanalytic sets and fields with analytic structure have been intensively studied by various authors (see \cite{Mac-Mar-Dries1, Mac-Mar-Dries2, Mac-Mar-Dries3,CLR, Lip-Rob-2000, schoutens, L, clu-2003, clr-06, vdDHM}). The most complete account to date is the one given by Cluckers and Lipshitz in \cite{CLip} where, inspired by almost all previous work, they provided an abstract setting which isolates the properties a ring of functions should satisfy in order to behave like a ring of analytic functions. When the ambient field is sufficiently tame, for example algebraically closed, real closed or simply henselian, the algebraic properties imposed on the considered rings of functions provide powerful structure theorems taking the form of a quantifier elimination theorem or of a tameness property such as o-minimality of the underlying structure.

In this article, we continue this line of investigation by studying real closed valued fields with analytic structure in a setting which has not been explored before. In previous work on real closed fields with analytic structure, the analytic structure is in some way intrinsic to the ordered structure. One main thrust of research, which is in the same spirit as this investigation, is that of the structures studied by Lipshitz and Robinson in \cite{Lip-Rob-2000} and later generalized by Cluckers, Lipshitz and Robinson in \cite{CLR} and Cluckers and Lipshitz in \cite{CLip}. In this setting, real closed valued fields are studied in a language without the valuation, and the analytic functions under consideration, although potentially defined using the valuation, are restricted to closed boxes $[-1,1]^n$. In particular these structures are o-minimal. The other approach is the general framework of $T$-convex structures introduced by Lewenberg and van den Dries in \cite{lewen-vdD}. In this setting, one studies non-standard models of o-minimal expansions of real closed fields by adding the natural valuation. Non-standard models of the theory of the field of real numbers with restricted analytic functions constitute one of the main examples of this setting. In the presence of the valuation, $T$-convex structures are not o-minimal. However, they are \emph{weakly o-minimal}: every definable subset of the line is a finite union of convex sets. 

In the present article we study real closed valued fields with analytic structure in a language which contains the valuation and in which the underlying rings of analytic functions are restricted to powers of the valuation ring. As the valuation is in the language, the structures we consider are obviously not o-minimal, so we are not in the framework of \cite{Lip-Rob-2000, CLR, CLip}. Furthermore, the valuation can be recovered from each analytic function, which implies our structures are not $T$-convex, since the reduct without the valuation is still not o-minimal. The main results of this paper include a quantifier elimination theorem (Theorem \ref{thm:QE_an}) and a tameness theorem (Theorem \ref{thm:weak-o-min_an}) which can be informally gathered as follows: 

\begin{theorem*} Let $K$ be a real closed valued field with separated analytic $\cA$-structure. Then the theory of $K$ in the one-sorted language of ordered valued fields with separated analytic $\cA$-structure has quantifier elimination and is weakly o-minimal.  
\end{theorem*}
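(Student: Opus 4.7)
The plan is to derive quantifier elimination by combining a preparation theorem for separated analytic functions with the one-sorted quantifier elimination for real closed valued fields of Cherlin--Dickmann, and then to deduce weak o-minimality by analyzing the shape of definable subsets of the line afforded by this QE.

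First, I would establish a Weierstrass-type preparation theorem in the real closed valued setting: every separated analytic term $f(x, y)$ can, after a partition of the parameter space into finitely many pieces definable by order and valuation conditions on polynomial terms in $x$, be written on each piece as $u(x, y) \cdot P(x, y)$, where $u$ is an analytic unit whose sign and valuation on the piece are controlled, and $P$ is a polynomial in $y$ whose coefficients are analytic in $x$. The natural route is to embed $K$ in its algebraic closure $K[i]$, which inherits a separated $\cA$-structure; apply the Weierstrass preparation of Cluckers--Lipshitz in the ACVF setting; and descend by taking real and imaginary parts, which doubles the degree of $P$ and forces additional case splits on sign.

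Quantifier elimination then proceeds by the standard induction. Given an existential formula $\exists y\,\phi(x, y)$, apply preparation to every analytic term in $\phi$ to replace it, on each piece, by a unit times a polynomial in $y$. Since the atomic predicates of the language are equalities, order relations, and valuation divisibilities, and since each such predicate on a given piece is controlled by the polynomial factor $P$ (the unit $u$ contributing only a fixed sign and valuation), the formula reduces on each piece to a formula in the one-sorted language of ordered valued fields applied to terms polynomial in $y$. Cherlin--Dickmann QE for RCVF then eliminates $y$, yielding a quantifier-free formula in the analytic language.

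For weak o-minimality, the QE shows that every definable $X \subseteq K$ is a finite boolean combination of atomic sets of the form $\{t(x) \geq 0\}$, $\{t(x) = 0\}$, and $\{\mathrm{val}(t_1(x)) \geq \mathrm{val}(t_2(x))\}$ for separated analytic terms $t, t_1, t_2$ in one variable. Applying preparation in one variable reduces each such term to a unit times a polynomial in $x$; each atomic set therefore has finitely many connected components in the order topology, and each component is an interval, hence convex. The principal obstacle I expect is the preparation step itself: the descent from $K[i]$ must produce pieces definable using only the one-sorted language of $K$, and the resulting factorization must be simultaneously compatible with sign and valuation conditions. This is essentially the point where the interplay between the ordered and valued structures is most delicate, since one must ensure that the analytic units really do have uniformly controlled sign as well as valuation across each piece, a feature not needed in the ACVF case of \emph{loc.\ cit.}
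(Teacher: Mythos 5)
Your overall architecture --- a preparation theorem for separated analytic terms combined with Cherlin--Dickmann quantifier elimination for $\RCVF$, followed by a one-variable analysis for weak o-minimality --- is the same as the paper's. But the step you yourself flag as ``the principal obstacle,'' namely controlling the \emph{sign} of the analytic units on each piece of the partition, is precisely the new content needed in the real closed case, and your proposed route does not deliver it. If you prepare over $K[i]$ and descend by conjugation, what you naturally obtain for a term $f$ with coefficients in $K$ is $f^2 = (u\bar u)\cdot(P\bar P)$, where $u\bar u$ is a positive unit and $P\bar P$ is a polynomial of doubled degree over $K$. This controls $|f|$ and $v(f)$ but irretrievably loses the sign of $f$, so the atomic predicates $0 < f_i$ cannot be reduced to polynomial conditions this way. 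Without an independent mechanism for pinning down the sign of the unit, both the elimination of $\exists y$ from order atoms and the convexity analysis of $\{x : t(x)>0\}$ fail.

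The paper resolves this in two different ways in the two halves of the argument, and neither goes through $K[i]$ for the QE step. For quantifier elimination it runs Lipshitz's Basic Lemma induction directly in the separated structure on $K$: writing $g_i=\sum_\nu b_{i,\nu}(x,\lambda,y)\rho^\nu$ preregular of degree $(\mu_{i,0},\nu_{i,0})$, in the critical case $v(b_{i,\nu_{i,0}})=1$ one splits into finitely many subcases according to the sign of each leading coefficient $b_{i,\nu_{i,0}}$; after the Weierstrass-type substitution the sign of the new term $f_i'$ is determined by that leading coefficient (the remaining summands lie in the maximal ideal), so the order atoms can be rewritten accordingly. For weak o-minimality the paper does pass through $K_{\alg}$, via the term structure theorem of Cluckers--Lipshitz ($t = R_i\cdot E_i^\sigma$ on $K$-annuli with $E_i^\sigma$ a \emph{strong} unit), and the sign of the unit is controlled by the residue map: a strong unit satisfies $P(\res(c^{-1}E_i^\sigma(x)))=0$ for a nonzero polynomial $P$ over $\widetilde K$, so partitioning according to which root of $P$ the residue hits yields $\mL_{\Div}$-definable pieces on which the sign is constant (Lemmas \ref{lem:traces} and \ref{lem:sign_change}). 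You would need to supply one of these mechanisms (or an equivalent) to close the gap; as written, your descent argument establishes at best the analogue of the $C$-minimality proof for $K_{\alg}$, not the ordered statement. A secondary point: weak o-minimality is a property of the theory, so the one-variable analysis must be carried out over an arbitrary elementary extension $L$, which requires re-packaging parameters into a separated structure $\mathcal{A}(L)$ before preparation applies; your proposal is silent on this.
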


As a corollary (Corollary \ref{cor:overwo}), we obtain that real closed valued fields of rank 1 such as the field of Puiseux series $\bigcup_{n>0}\R(\!(t^{1/n})\!)$ or Hahn fields such as $\R(\!(t^{\Q})\!)$ and $\R(\!(t^\R)\!)$ with (full) overconvergent analytic structure are weakly o-minimal. We also show quantifier elimination for the theory of such structures (see Theorem \ref{thm:QE_ov}). 

\

It is worth noticing that, since real closed valued fields are henselian, the structures considered here fall into the formalism  for henselian fields with analytic structure presented in \cite{CLip}. One of the results of that paper is a quantifier elimination theorem (\cite[Theorem~4.5.15]{CLip}) in a multi-sorted language from which the model-theoretic tameness property called $b$-minimality is deduced (\cite[Theorem~6.3.7]{CLip}). Our original expectation was that $b$-minimality would imply weak o-minimality in the real closed case. However, it is not at all clear from $b$-minimality what the definable sets on the value group and residue field will look like, nor how these descriptions interact with the ordering. By working in the one-sorted language as we do here, we get further information about the definable sets that might not be extracted from the previous results in the multi-sorted language.  

A related point to consider is what happens for algebraically closed valued fields with separated analytic structure. In this case, a quantifier elimination is also available in the one-sorted language (see \cite[Theorem 4.5.15]{CLip}). It is worth observing, as we do in Theorem \ref{thm:Cmin}, that $C$-minimality (the natural tameness notion in this case) follows by a fairly short argument. We do not know if $C$-minimality could be deduced directly from the multi-sorted setting. 

\

The article is organized as follows. In Section \ref{sec:setting}, notation is set and some definitions from \cite{CLip} are recalled. Quantifier elimination is proved in Section \ref{sec:QE}. Section \ref{sec:weak} is divided in two parts: in Section \ref{sec:C-min} we prove $C$-minimality in the algebraically closed case and in Section \ref{sec:weak-o} weak o-minimality is shown in the real closed case. In Section \ref{sec:over} we prove the quantifier elimination theorem for real closed valued fields with overconvergent analytic structure.

\section{The setting}\label{sec:setting}

Let $(K,v)$ be a valued field. Somewhat unconventionally, we write $v$ multiplicatively, that is, we let $(\Gamma_K,\cdot)$ denote a divisible ordered abelian group and $v\colon K\to \Gamma\cup\{0\}$ denote a surjective map satisfying  
\begin{enumerate}
\item $v(x)=0$ if and only if $x=0$;
\item $v(xy)=v(x)v(y)$;
\item $v(x+y)\leqslant \max\{v(x),v(y)\}$. 
\end{enumerate}

We let $K^\circ$ denote the valuation ring of $(K,v)$, $K^{\circ\circ}$ its maximal ideal, $\widetilde{K}:=K^\circ/K^{\circ\circ}$ its residue field and $\res\colon K^\circ\to \widetilde{K}$ the residue map. Given $a\in K$ and $\gamma\in \Gamma_K$, the open ball centered at $a$ of radius $\gamma$ is the set $\{x\in K\mid v(a-x)<\gamma\}$. The closed ball centered at $a$ of radius $\gamma$ is the set $\{x\in K\mid v(a-x)\leqslant \gamma\}$. We treat points of $K$ as closed balls of radius $0$. The set of open balls forms a basis for the topology on $K$. 
The one-sorted language of valued fields $\mathcal{L}_{\Div}$ is the language $(+,-,\cdot, 0,1, \Div)$, where $\Div$ is a binary relation symbol interpreted in a valued field $(K,v)$ by $\Div(x,y) \Leftrightarrow  v(x)\leqslant v(y)$. 

For much, but not all, of the paper, $K$ will be an \emph{ordered, valued} field; that is, the valuation is convex with respect to the ordering. We use the standard absolute value notation $|x|$ to denote the function
\begin{equation*}
|x|:=
\begin{cases} 
x & \text{if } x\geqslant 0\\
-x & \text{if } x<0.  
\end{cases}
\end{equation*}
The language of ordered valued fields is $\mathcal{L}_{\Div}^{\leqslant}:=\mathcal{L}_{\Div}\cup\{\leqslant\}$, where $\leqslant$ is interpreted in an ordered valued field by the ordering.

\subsection{Separated analytic structure}\label{subsec:separated}

The notion of \emph{separated analytic $\mathcal{A}$-structure} on a valued field $K$ is defined in \cite{CLip}. Briefly, it consists of a ring $A$, a collection $\mathcal{A} = (A_{m,n})_{m,n\in\N}$ of subrings of the ring of formal power series over $A$ in the separated variables $x=(x_1,\ldots, x_m)$ and $\rho=(\rho_1,\ldots,\rho_n)$, and a collection $(\sigma_{m,n})$ of homomorphisms from $A_{m,n}$ to the ring of $K^\circ$-valued functions on $(K^\circ)^m\times(K^{\circ\circ})^n$ satisfying natural compatibility conditions. The rings extend polynomial rings over $A$ and, crucially, satisfy Weierstrass division and a strong Noetherian property (see \cite[Sections 4.1 and 4.2]{CLip}). The most obvious example of separated analytic $\mathcal{A}$-structure occurs when $K$ is a complete valued field with rank one value group, and $A=K^\circ$. The rings $A_{m,n}$ consist of the convergent power series in the $x$-variables, and any power series in the $\rho$-variables. These will converge as functions on $(K^\circ)^m\times(K^{\circ\circ})^n$, so the $\sigma_{m,n}$ can be taken to be the homomorphisms which take the power series as an element of the ring to the function defined by evaluating the power series. There are many other examples, as described in \cite[Section~4.4]{CLip}. In general, we may suppose that $\ker(\sigma_{0,0}) =(0)$ and think of $A$ as a subring of $K^\circ$ and $A_{m,n}$ as a subring of $K^\circ[\![x,\rho]\!]$.

The language of valued fields with (separated) analytic $\mathcal{A}$-structure, denoted $\mathcal{L}_\mathcal{A}$, is the language $\mathcal{L}_{\Div}$ together with a unary function symbol $^{-1}$ and an $(m+n)$-ary function symbol for each element $f\in A_{m,n}$ with $m,n\in \N$. The structure $(K,\mathcal{L}_\mathcal{A})$ is defined by interpreting $\mathcal{L}_{\Div}$ in the natural way, $^{-1}$ as the multiplicative inverse function extended to $K$ by setting $0^{-1}=0$, and each $f\in A_{m,n}$ as the function 
\[
f^{\mathcal{L}_\mathcal{A}}(x,\rho):=
\begin{cases}
\sigma_{m,n}(f)(x,\rho), & \text{ if $(x,\rho)\in (K^\circ)^m\times (K^{\circ\circ})^n$}; \\ 
0, & \text{otherwise.}
\end{cases}
\]
We define the language $\mL_{\mathcal{A}}^{\leqslant}$ of \emph{ordered valued fields with analytic $\mathcal{A}$-structure} as the extension of $\mL_{\mathcal{A}}$ by a new binary relation symbol $\{\leqslant\}$. 

\
As always, Weierstrass preparation plays a central role in analysing the definable sets; to use it requires an appropriate notion of regularity. This is one place where we see the different roles of the two kinds of variables. 
The following terminology comes from \cite{L}. Let $x=(x_1,\ldots,x_m)$, $\rho=(\rho_1,\ldots,\rho_n)$, $x'=(x_{m+1},\ldots,x_{m'})$ and $\rho'=(\rho_{n+1},\ldots,\rho_{n'})$. Let $f(x, \rho, x', \rho')$ be an $\mL_\cA$-term in which $^{-1}$ is only applied to terms not involving $x'$ or $\rho'$. Then $f = \sum_{\mu\nu} a_{\mu\nu}(x, \lambda) (x')^\mu (\rho')^\nu$ where the $a_{\mu\nu}$ are $\mL_{\cA}$-terms. The function $f$ is called \emph{preregular of degree $(\mu_0, \nu_0)$} if $a_{\mu_0\nu_0}= 1$ and for $\nu < \nu_0$ and also for $\nu = \nu_0$, $\mu>\mu_0$, $a_{\mu\nu}(x,\lambda)\in K^{\circ\circ}$ for all $x,\lambda$ in $(K^{\circ})^{m}\times (K^{\circ\circ})^{n}$. We
call $f$ \emph{regular of degree $\mu_0$} if it is preregular of degree $(\mu_0,0)$ and \emph{regular of degree $\nu_0$} if it is preregular
of degree $(0, \nu_0)$. 

%
%
%
%
%
%

\section{Quantifier elimination for real closed valued fields with separated analytic structure}\label{sec:QE}

Let $K$ be a real closed valued field with separated analytic $\mathcal{A}$-structure. To show that $Th(K,\mathcal{L}_\mathcal{A}^{\leqslant})$ has quantifier elimination, we use the following result, due to Cherlin and Dickmann, about its associated algebraic theory $\Th(K, \mL_\Div^\leqslant)$. This latter theory corresponds to the $\mathcal{L}_{\Div}^\leqslant$-theory of real closed non-trivially valued fields, commonly denoted by $\RCVF$. 

\begin{theorem}[{\cite[Section 2]{CD}}]\label{thm:QE} The theory $\RCVF$ has quantifier elimination. 
\end{theorem}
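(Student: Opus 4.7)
My plan is to apply the standard embedding-test characterization of quantifier elimination. Fix two models $(K, v, \leqslant), (L, w, \leqslant) \models \RCVF$ with $L$ sufficiently saturated, a common substructure $A \subseteq K$, and an $\mathcal{L}_{\Div}^{\leqslant}$-embedding $\iota\colon A \to L$; it suffices to extend $\iota$ to an embedding $K \to L$. An $\mathcal{L}_{\Div}^{\leqslant}$-substructure of a model of $\RCVF$ is an ordered integral domain together with a convex valuation subring, so the task reduces to extending ordered-valued embeddings between such rings.

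I would build the extension in stages. First, pass to the fraction field of $A$, which carries a unique compatible order and valuation, and extend $\iota$ uniquely. Second, take the henselization: since a real closed valued field is henselian, the henselization of $\iota(\mathrm{Frac}(A))$ sits canonically inside $L$ and is isomorphic over $\iota$ to the henselization of $\mathrm{Frac}(A)$ inside $K$. Third, replace the current intermediate ordered valued subfield by its real closure in $K$: convexity of the valuation ring forces the valuation to extend uniquely along any algebraic ordered extension, and the ordering on a real closure is determined by that of the base, so $\iota$ extends once the correct roots are matched up.

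The crux is extending by a single transcendental $t \in K$ over the current intermediate ordered valued field $F$. The quantifier-free $\mathcal{L}_{\Div}^{\leqslant}$-type of $t$ over $F$ should be encoded by the Dedekind cut of $t$ in the ordering of $F$ together with the cut in $\Gamma_F$ induced by $\{v(t-a) \mid a \in F\}$ (and, in the immediate residual case, the residue of $(t-a)/b$ for suitable $a, b \in F$). By saturation of $L$, one produces $t' \in L$ realizing this combined cut data; one then verifies that $F(t)$ and $F(t')$, with their induced orderings and valuations, are $F$-isomorphic as ordered valued fields via the usual trichotomy for transcendental valued extensions (residually transcendental, value transcendental, or immediate).

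The main obstacle will be showing that this cut data genuinely determines the isomorphism type in the transcendental case. The purely valuation-theoretic content is essentially Robinson's analysis of transcendental extensions for algebraically closed valued fields; the extra difficulty lies in verifying that the ordering on $F(t)$ is forced by the order cut of $t$ in $F$ together with the valuation-theoretic data. Convexity of the valuation ring is the key ingredient here, since it ensures that order and valuation interact tamely: once the algebraic-valuational data agree on both sides, the order on the generated extension is determined by the cut of $t$ in $F$, and the desired $F$-isomorphism follows.
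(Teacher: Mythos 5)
The paper does not prove this statement at all: it is imported as a black box from Cherlin and Dickmann \cite{CD}, so there is no in-paper argument to compare against. Your strategy --- the saturated-model embedding test, passing to the fraction field, the henselization, the real closure, and then a single transcendental extension controlled by the order cut together with the valuation-theoretic cut data --- is exactly the standard route, and is essentially how the result is proved in the cited source. So the approach is sound and not a deviation from anything in the paper.

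That said, as a self-contained proof your write-up stops precisely where the real work begins, and you say so yourself. Two things are asserted rather than established. First, to invoke saturation of $L$ you must check that the combined order-and-valuation cut over $F$ is a consistent (finitely satisfiable) partial type in $L$; this requires an argument that finitely many conditions of the form $a < x < b$ and $v(x-c)\ \square\ \gamma$, jointly realized by $t$ in $K$, are already realized over $\iota(F)$ in $L$, and that is not automatic --- it is where convexity of the valuation and real closedness of $F$ (e.g., density of $F$ in the relevant cuts, divisibility of $\Gamma_F$, real closedness of $\widetilde{F}$) actually get used. Second, the claim that the cut data determines the ordered valued field isomorphism type of $F(t)$ needs the computation that, over a real closed $F$ with convex valuation, every polynomial factors into linear and positive-definite quadratic pieces, so that both $\mathrm{sign}(p(t))$ and $v(p(t))$ reduce to the cut of $t$ and to the function $a \mapsto v(t-a)$ (using that $v(u^2+w^2)=\max(v(u)^2,v(w)^2)$ when the valuation is convex), together with the residue data in the residually non-trivial cases. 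Neither verification is difficult, but together they constitute the content of \cite[Section 2]{CD}; without them your text is an outline of the known proof rather than a proof.
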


\begin{theorem}\label{thm:QE_an} The theory $Th(K,\mathcal{L}_\mathcal{A}^{\leqslant})$ has quantifier elimination. 
\end{theorem}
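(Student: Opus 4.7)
The plan is to use the standard model-theoretic embedding criterion for quantifier elimination. Let $M_1, M_2 \models \mathrm{Th}(K, \mathcal{L}_\mathcal{A}^\leqslant)$ with $M_2$ sufficiently saturated, let $A \subseteq M_1$ be an $\mathcal{L}_\mathcal{A}^\leqslant$-substructure, and let $\iota \colon A \to M_2$ be an $\mathcal{L}_\mathcal{A}^\leqslant$-embedding. It suffices to show $\iota$ extends to an embedding $M_1 \to M_2$, and by a routine Zorn argument this reduces to extending $\iota$ by one element $a \in M_1 \setminus A$ at a time. After enlarging $A$ (and iterating the extension argument), I would assume that $A$ is itself a real closed valued subfield of $M_1$ whose valuation ring is closed under all analytic functions of $\mathcal{A}$.

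The heart of the proof is to show that the $\mathcal{L}_\mathcal{A}^\leqslant$-type of $a$ over $A$ is determined by its $\mathcal{L}_{\Div}^\leqslant$-type. Granting this, Theorem~\ref{thm:QE} (quantifier elimination for $\RCVF$) together with $|A|^+$-saturation of $M_2$ provides some $b \in M_2$ realizing the same $\mathcal{L}_{\Div}^\leqslant$-type as $a$ over $\iota(A)$, and this $b$ automatically realizes the same $\mathcal{L}_\mathcal{A}^\leqslant$-type, so $a \mapsto b$ gives the desired extension. Concretely, I need to show that for every $\mathcal{L}_\mathcal{A}^\leqslant$-term $t(x,\bar y)$ and every $\bar c \in A^n$, both $v(t(a,\bar c))$ and $\mathrm{sign}(t(a,\bar c))$ are determined by the $\mathcal{L}_{\Div}^\leqslant$-type of $a$ over $A$.

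To achieve this I would invoke the term-structure and Weierstrass preparation machinery from \cite{CLip} (combined with the preregular/regular setup recalled in the excerpt from \cite{L}). The output should be: for any such term $t(x,\bar c)$, there is a finite $\mathcal{L}_{\Div}^\leqslant(A)$-definable partition of the line in $x$ into pieces on each of which $t(x,\bar c) = U(x) \cdot P(x)$, where $P \in A[x]$ is a polynomial and $U$ is a unit in an appropriate analytic ring over $A$. On each such piece the unit $U$ takes values in $(A^\circ)^\times$ with a constant residue (by the Weierstrass factorization and the fact that being a unit forces the constant term modulo $A^{\circ\circ}$ to be a unit of the residue field); since the maximal ideal of a real closed valued field is convex, the sign of $U$ is constant on each piece and is determined by an element of $A$. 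Thus $v(t(a,\bar c))$ and $\mathrm{sign}(t(a,\bar c))$ reduce to the polynomial data $v(P(a))$ and $\mathrm{sign}(P(a))$, which depend only on the $\mathcal{L}_{\Div}^\leqslant$-type of $a$ over $A$.

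The main obstacle is executing the term decomposition cleanly in the presence of the ordering. Arbitrary $\mathcal{L}_\mathcal{A}$-terms may involve nested applications of $^{-1}$ and of analytic functions, so reducing them to preregular form and applying Weierstrass preparation requires carefully iterating the term-structure lemmas from \cite{CLip, L}, with the partition kept uniformly $\mathcal{L}_{\Div}^\leqslant$-definable over $A$. Compared to the algebraically closed case treated shortly in Theorem~\ref{thm:Cmin}, the new subtlety is the need to track signs of units through the Weierstrass factorization; here the convexity of $A^{\circ\circ}$ and the fact that $\widetilde{A}$ is itself real closed are the crucial algebraic inputs. Once the term decomposition and the sign analysis of units are in hand, the conclusion follows directly from Theorem~\ref{thm:QE} and saturation.
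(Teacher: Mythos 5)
Your overall strategy (the embedding criterion, reducing the $\mathcal{L}_\mathcal{A}^{\leqslant}$-type of a single element over a substructure to its $\mathcal{L}_{\Div}^{\leqslant}$-type via a unit-times-polynomial decomposition of one-variable terms) is genuinely different from the paper's. The paper runs the syntactic Denef--van den Dries/Lipshitz elimination: a ``Basic Lemma'' eliminating $\exists y\exists\rho$ from quantifier-free formulas in which $^{-1}$ avoids the quantified variables, proved by induction on the number of those variables using preregularity and Weierstrass division, the only new ingredient over \cite{L} being a further case split over the possible signs of the leading Weierstrass data $b_{i,\nu_{i,0}}$ (indexed by subsets of $I_2$) so that the order relations can be correctly rewritten after the substitution $f_i\mapsto f_i'$. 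The one-variable term structure theorem (Theorem \ref{thm:terms}) is only invoked in the paper \emph{after} quantifier elimination, to prove weak o-minimality.

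The gap in your argument is exactly the step you flag as ``the main obstacle'': the decomposition $t(x,\bar c)=U(x)\cdot P(x)$ on a finite $\mathcal{L}_{\Div}^{\leqslant}(A)$-definable partition, with $P\in A[x]$ and $U$ a unit of constant sign whose valuation is that of an element of $A$. The available term-structure theorem (\cite[Theorem 5.5.3]{CLip}, quoted as Theorem \ref{thm:terms}) applies to terms with parameters in a \emph{model} carrying the (extended) analytic structure, and its output lives over the algebraic closure: the cover is by $K$-annuli of $K_{\alg}^{\circ}$, the rational functions may have zeros and poles in $K_{\alg}\setminus K$, and a strong unit only satisfies $P(\res(c^{-1}f^{\sigma}(x)^{\ell}))=0$ for some nonzero polynomial $P$, so its residue takes finitely many values rather than a single one; ``constant residue, hence constant sign'' is false without a further partition into residue fibers, and the definability of that partition over the base is itself nontrivial (in the paper it is Lemma \ref{lem:sign_change}, whose proof uses Lemma \ref{lem:traces} and the $C$-minimality of the algebraically closed field with analytic structure, and which is only ever applied with parameters from a full model). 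Making all of this work over an arbitrary $\mathcal{L}_\mathcal{A}^{\leqslant}$-substructure $A$ --- which is a subfield closed under the function symbols but need not satisfy the axioms of a field with separated analytic structure, need not be henselian or real closed, and whose Weierstrass data and annulus centers may live only in $A_{\alg}$ --- is precisely the content that is missing. Until you supply a version of the term decomposition valid over the substructures that actually arise in the back-and-forth, the central claim that the $\mathcal{L}_{\Div}^{\leqslant}$-type of $a$ over $A$ determines its quantifier-free $\mathcal{L}_\mathcal{A}^{\leqslant}$-type is unsupported, and the rest of the argument does not go through.
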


\begin{proof} The proof follows the strategy of \cite{L} (which is also based on \cite{DD}) in which the key step is the following so-called \emph{Basic Lemma}. The method to derive the theorem from the Basic lemma is exactly as in \cite[3.8.5]{L}. The proof of the Basic lemma is given below. 
\end{proof}

\begin{lemma}[Basic Lemma] Let $\varphi(x,\lambda,y,\rho)$ be an $\mathcal{L}_{\mathcal{A}}^{\leqslant}$-quantifier free formula in which $^{-1}$ is only applied to terms not involving $y$ or $\rho$. Then there is an $\mathcal{L}_\mathcal{A}^{\leqslant}$-quantifier free formula $\psi(x,\lambda)$ equivalent to $\exists y\exists \rho\, \varphi(x,\lambda,y,\rho)$. 
\end{lemma}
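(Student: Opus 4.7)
The plan is to mimic Lipshitz's proof of the analogous Basic Lemma in \cite[3.8.5]{L} for algebraically closed valued fields, substituting Theorem \ref{thm:QE} (quantifier elimination for $\RCVF$, due to Cherlin and Dickmann) for the role played there by Robinson's theorem. The argument is by induction on the complexity of $\varphi$, with Weierstrass preparation reducing the analytic problem to an algebraic one at the base case, and the inductive step handled via disjunctive normal form.

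First I would put $\varphi$ in disjunctive normal form and treat one conjunction of literals at a time, which is legitimate since $\exists$ distributes over $\vee$. Atomic literals have the form $t_1 = t_2$, $t_1 \leqslant t_2$, or $\Div(t_1, t_2)$. The hypothesis that $^{-1}$ is only applied to subterms free of $y, \rho$ lets me clear denominators (after finitely many quantifier-free case splits on their vanishing, each involving only terms in $x, \lambda$) and reduce to considering $\mL_\cA$-terms of the form $f(x, \lambda, y, \rho)$ with $f$ drawn from the ring $A_{m+1, n+n'}$. A further case split on whether $y \in K^\circ$ (via $y \mapsto y^{-1}$ when not) and on whether each $\rho_i \in K^{\circ\circ}$ reduces to the situation in which $(y, \rho)$ lies in $(K^\circ) \times (K^{\circ\circ})^{n'}$, the domain on which elements of $A_{m+1, n+n'}$ actually define functions.

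Next I apply Weierstrass preparation from \cite[Section 4.2]{CLip}: after an appropriate change of variables making the relevant $f$ preregular in $y$ or some $\rho_i$, the preparation theorem factors it as $f = u \cdot P$ with $u$ a unit on the domain and $P$ either a polynomial in $y$ of some degree or a distinguished Weierstrass series in $\rho$, whose coefficients are $\mL_\cA$-terms in $(x, \lambda)$ only. Divisibility conditions $\Div(t_1, t_2)$ translate directly from $f$ to $P$ since $u$ is a unit; sign conditions $t_1 \leqslant t_2$ reduce to sign conditions on $P$ together with a quantifier-free condition recording the sign of $u$. After iterating through all atomic literals, the formula becomes a Boolean combination of $\mL_{\Div}^{\leqslant}$-conditions on polynomials in $(y, \rho)$ with coefficients that are $\mL_\cA$-terms in $(x, \lambda)$; applying Theorem \ref{thm:QE} eliminates the quantifiers $\exists y \exists \rho$ to yield the required $\psi(x, \lambda)$.

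The main obstacle is the careful bookkeeping in the preparation step. Lipshitz's ACVF argument tracks the valuation of each unit factor; here one must track signs as well, since the language contains $\leqslant$. This introduces a finite case split at each preparation but no new conceptual difficulty: the sign of a unit $u$ on the domain is determined by a quantifier-free condition on $(x, \lambda)$ (for instance by evaluating $u$ at $y = 0$, $\rho = 0$, where $u$ takes a unit value in $K^\circ$ whose residue determines its sign). Care is also needed to ensure that the change of variables used to achieve preregularity does not violate the hypothesis on $^{-1}$ for the subformulas still to be treated, which, as in \cite{L}, dictates the order in which the preparation is performed.
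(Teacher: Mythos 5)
Your overall plan---follow Lipshitz's Basic Lemma argument, replace Robinson's theorem for ACVF by the Cherlin--Dickmann quantifier elimination for $\RCVF$, and add bookkeeping for signs---is indeed the strategy of the paper's proof. But two of your concrete steps do not go through as written. First, a round of Weierstrass preparation does not reduce the matrix of the formula to ``polynomials in $(y,\rho)$ with coefficients that are $\mL_{\cA}$-terms in $(x,\lambda)$'': preparation makes each $f_i$ polynomial in \emph{one} distinguished variable ($y_M$ or $\rho_N$), with coefficients still analytic in the remaining $y$- and $\rho$-variables, so the argument has to be an induction on the pair $(M,N)$, eliminating one variable at a time. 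Moreover, whether one prepares in $\rho_N$ or falls back to $y_M$ is not arranged by a change of variables: writing $g_i=\sum_\nu b_{i,\nu}(x,\lambda,y)\rho^\nu$, the series is regular in $\rho$ only where the leading coefficient $b_{i,\nu_{i,0}}$ has valuation $1$, and where $v(b_{i,\nu_{i,0}})<1$ it degenerates into something polynomial (regular) in $y_M$. This dichotomy (Cases 2a/2b in both \cite[3.13]{L} and the paper) is the engine of the whole proof and is absent from your outline.

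Second, your mechanism for tracking signs is not correct as stated. The Weierstrass unit attached to $g_i$ is, up to a factor of the form $1+\varepsilon$ with $v(\varepsilon)<1$, the leading coefficient $b_{i,\nu_{i,0}}(x,\lambda,y)$; this is a term in $(x,\lambda,y)$, not in $(x,\lambda)$, and its sign can vary with $y$, so evaluating at $y=\rho=0$ does not determine the sign on the rest of the domain, and the sign is not ``a quantifier-free condition on $(x,\lambda)$.'' The paper resolves this by case-splitting over all sign patterns $B\subseteq I_2$ of the $b_{i,\nu_{i,0}}$ (these conditions stay under the existential quantifier and are disposed of by the induction), adjoining a new variable $y_{M+1}$ interpreting $(\prod_i b_{i,\nu_{i,0}})^{-1}$, and replacing each $g_i$ by $g_i/b_{i,\nu_{i,0}}$ written without division; the normalized series is preregular of degree $(0,\nu_{i,0})$ with leading coefficient $1$, so its Weierstrass unit is automatically positive, and the order relations in $\varphi_2$ are flipped according to $B$. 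Some normalization of this kind is needed before your ``read off the sign of the unit'' step can be justified.
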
 

\begin{proof} Let $\varphi(x,\lambda,y,\rho)$ be as in the statement of the lemma with $y=(y_1,\ldots,y_M)$ and $\rho=(\rho_1,\ldots,\rho_N)$. By standard reductions we may assume that $\exists y\exists \rho\varphi$ is of the form 
\[
\varphi_0(x,\lambda)\wedge \exists y\exists \rho\left(\bigwedge_{i=1}^4\varphi_i(x,\lambda,y,\rho)\right), 
\]
where 
\begin{align*}
\varphi_1(x,\lambda,y,\rho)\  \text{is the formula}  & \bigwedge_{i\in I_1} (f_i(x,\lambda,y,\rho)=0); \\ 
\label{eq:2} \varphi_2(x,\lambda,y,\rho)\  \text{is the formula} & \bigwedge_{i\in I_2} (0< f_i(x,\lambda,y,\rho));\\ 
\varphi_3(x,\lambda,y,\rho)\ \  \text{is the formula} & \bigwedge_{i\in I_3} (v(f_i(x,\lambda,y,\rho))=1); \\ 
\varphi_4(x,\lambda,y,\rho) \text{is the formula} & \bigwedge_{i\in I_4, j\in I_5} v(f_i(x,\lambda,y,\rho))<v(f_j(x,\lambda,y,\rho)) , 
\end{align*}
where the $x$ and $y$ variables range over the valuation ring, the $\lambda$ and $\rho$-variables range over the maximal ideal and each $f_i$ is of the form $f_i = a_i(x,\lambda)g_i(x,\lambda,y,\rho)$ with $a_i(x,\lambda)$ an $\mathcal{L}_{\mathcal{A}}$-term and $g_i\in A_{M,N}$ preregular of degree $(\mu_{i,0},\nu_{i,0})$. We proceed by induction on $(M,N)$. 

\noindent
\textbf{Case 1:} Suppose $N=0$. This is handled word for word as in \cite[3.13 Case 1]{L}. 

\noindent
\textbf{Case 2:} Suppose $N>0$ and let $I:=\bigcup_{s=1}^5 I_s$. Since $g_i$ is preregular of degree $(\mu_{i,0},\nu_{i,0})$, writing $g_i=\sum_{\nu} b_{i,\nu}(x,\lambda,y)\rho^\nu$ we have that $b_{i,\nu_{i,0}}$ is preregular of degree $\mu_{i,0}$. As in \cite[3.13 Case 2]{L}, we further split into cases with respect to the following disjunction:
\[
\varphi\longleftrightarrow \left([\bigvee_{i\in I} \varphi\wedge v(b_{i,\nu_{i,0}})<1)] \vee (\varphi\wedge \bigwedge_{i\in I} v(b_{i\nu_{i,0}})=1) \right).
\]

\noindent
\textbf{Case 2a:} Fix $j\in I$ and suppose our formula is of the form $\varphi\wedge v(b_{j,\nu_{j,0}})<1$. We proceed as in \cite[3.13 Case 2a]{L}: one directly shows in this case that for all $i\in I$, under the assumption $v(b_{j,\nu_{j,0}})<1$, $f_i=r_i$ for an element $r_i\in A_{M,N}$ which is polynomial in $y_M$. By uniformly replacing $f_i$ by $r_i$, we obtain the result by Theorem \ref{thm:QE} and the induction hypothesis. 

\noindent
\textbf{Case 2b:} Suppose our formula is $\varphi\wedge \bigwedge_{i\in I} v(b_{i,\nu_{i,0}})=1$. Here we cannot proceed as in \cite[3.13 Case 2b]{L} since we need to take into account potential sign changes. Let $\mathcal{P}(I_2)$ denote the set of subsets of $I_2$. For $B\in \mathcal{P}(I_2)$ let $\theta_B$ be the formula 
\[
\varphi\wedge \bigwedge_{i\in I} v(b_{i,\nu_{i,0}})=1\wedge \bigwedge_{i\in B} 0<b_{i,\nu_{i,0}} \wedge \bigwedge_{i\in I_2\setminus B} b_{i,\nu_{i,0}}<0. 
\]
We further split into cases with respect to the disjunction $\bigvee_{B\in \mathcal{P}(I_2)} \theta_B$ (which is equivalent to $\varphi\wedge \bigwedge_{i\in I} v(b_{i,\nu_{i,0}})=1$). Fix $B\in \mathcal{P}(I_2)$. Let $y_{M+1}$ be a new variable and consider the following formula which is equivalent to $\theta_B$
\[
\exists y_{M+1} [v(y_{M+1})= 1 \wedge \theta_B \wedge y_{M+1}\prod_{i\in I} b_{i,\nu_{i,0}} -1 = 0].
\]
Let $f_i':=a_i(x,\lambda)g_i'(x,\lambda,y,\rho)$ where 
\[
g_i'=(\sum_{\nu\neq\nu_{i,0}} b_\nu\rho^\nu) y_{M+1}\prod_{j\neq i} b_{j,\nu_{j,0}} + \rho^{\nu_{i,0}}. 
\]
We will replace each $f_i$ by $f_i'$, but to get an equivalent formula, we need to take care of the potential change of sign of $f_i'$ for $i\in I_2$. Since the sign of $f_i'$ only depends on the sign of $b_{i,\nu_{i,0}}$ (which is specified by $B$), we replace the corresponding order relation in $\varphi_2$ accordingly, that is, we replace $\varphi_2$ inside $\theta_B$ by the conjunction 
\[
\bigwedge_{i\in B} (0< f_i'(x,\lambda,y,\rho)) \wedge \bigwedge_{i\in I_2\setminus B} (f_i'(x,\lambda,y,\rho)<0).
\]
Call this new equivalent formula $\theta_B'$. Note that $f_i'$ is preregular of degree $(0,\nu_{i,0})$ and $Y_{M+1}\prod_{i\in L} b_{i,\nu_{i,0}} -1$ is preregular in $(y,y_{M+1})$ of degree  $(\sum \mu_{i,0}, 0)$. These conditions allow us to conclude the proof following the same argument as in \cite[3.13 Case 2b]{L}. 
\end{proof}

\section{Further tameness properties of fields with analytic structure}\label{sec:weak}

In this section we show that $Th(K,\mathcal{L}_{\mathcal{A}}^{\leqslant})$ is weakly o-minimal when $K$ is a real closed valued field, and is $C$-minimal when $K$ is an algebraically closed valued field. We recall the definitions.

Let $\mL$ be a language extending $\{\leqslant\}$. A totally ordered structure $(K,\mL)$ is said to be \emph{weakly o-minimal} if every definable subset $X\subseteq K$ is a finite union of points and convex sets. The theory $\Th(K,\mL)$ is \emph{weakly o-minimal} if every elementarily equivalent structure $(K',\mL)\equiv (K, \mL)$ is weakly o-minimal. Analogously, let $\mL$ be a language extending the language $\mL_{\Div}$. An expansion of a valued field $(K, \mL)$ is called \emph{$C$-minimal} if for every elementarily equivalent structure $(K',\mL)\equiv (K,\mL)$, every definable subset $X\subseteq K$ is a finite boolean combination of balls (either closed or open).

In \cite{lip-rob-98}, Lipshitz and Robinson showed that if $K$ is an algebraically closed valued field of rank 1 with separated analytic $\mathcal{A}$-structure, then $(K,\mL_\mathcal{A})$ is $C$-minimal. Here we give a shorter proof of the more general statement that applies when $K$ is a field of any rank. A very similar argument gives the weak o-minimality result when $K$ is a real closed valued field of any rank. In both cases, we need to understand the definable functions in one variable on an arbitrary model $L$ of the theory. There is a by-now standard method to incorporate the analytic structure on $L$ into parameters of the power series from $\mathcal{A}$, creating a new separated analytic structure $\mathcal{A}(L)$. This method is explained in detail in \cite[Section 4.5]{CLip}. Furthermore, the separated analytic $\mathcal{A}$-structure on $K$ extends uniquely to a separated analytic $\mathcal{A}$-structure on $K_\alg$, the algebraic closure of $K$ (see \cite[Theorem 4.5.11]{CLip}). This then gives rise to a precise description of the definable functions in one variable. We use the following theorem.

\begin{theorem}[{\cite[Theorem 5.5.3]{CLip}}]\label{thm:terms} Let $x$ be a single variable and let $t(x)$ be a term of $\mathcal{L}_{\cA}$. 
There is a finite set $S\subseteq K_{\alg}^\circ$ and a finite cover of $K_{\alg}^\circ$ by $K$-annuli $\mathcal{U}_i$ such that for each $i$ there is a rational function $R_i \in K(x)$ and a strong unit $E_i\in \mathcal{O}^\dagger(\mathcal{U}_i)$ such that 
\[
t(x)\vert_{\mathcal{U}_i \setminus S}=R_i(x) \cdot E_i^\sigma \vert_{\mathcal{U}_i \setminus S}(x).  
\]
\end{theorem}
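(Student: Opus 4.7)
The natural approach is induction on the structural complexity of the term $t(x)$. The atomic cases---constants in $A \subseteq K^\circ$ and the variable $x$---are trivial: take $S = \emptyset$, the single-piece cover $\{K_{\alg}^\circ\}$, and $E^\sigma \equiv 1$. The inductive step then splits according to how $t$ is built from subterms.

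The field-operation cases are reasonably direct. For sub-terms $t_1, t_2$ with decompositions on $K$-annulus covers, one takes a common refinement. Products pose no difficulty, since the product of rational functions is rational and the product of strong units is a strong unit. Sums require further subdivision of each piece according to which summand dominates in valuation, so that on each subpiece the dominant term can be factored out and the ratio absorbed into a strong unit close to $1$. For the inverse $t = t_1^{-1}$, the zeros of the rational part in $\mathcal{U}_i$ are adjoined to $S$, and the reciprocal of a strong unit is again a strong unit on the complement.

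The essential case is $t = f(t_1, \dots, t_m, s_1, \dots, s_n)$ for some $f \in A_{m,n}$. One substitutes the induction-hypothesis decompositions of the sub-terms into $f$, fixes an annulus $\mathcal{U}$, and applies Weierstrass preparation from the axioms for separated analytic $\mathcal{A}$-structure. After centering and normalizing $\mathcal{U}$ via a change of variables (possibly requiring parameters from $K_{\alg}$), and after ensuring the resulting series is preregular in the active variable, preparation yields an equality $t(x) = u(x) \cdot p(x)$ on $\mathcal{U}$ with $u$ a strong unit and $p$ a polynomial over $K_{\alg}$. The roots of $p$ are then adjoined to $S$, and $\mathcal{U}$ is further subdivided into sub-annuli on each of which a single linear factor of $p$ dominates; the non-dominant linear factors, being each uniformly close to a constant, can be folded into a strong unit, yielding the desired factorization $R_i \cdot E_i^\sigma$.

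The main obstacle is the bookkeeping in this last case: one must show that strong units are preserved under the substitutions that arise, verify that Weierstrass factorization behaves well under the change of variables, and ensure that the finite set $S$ and the annulus cover descend to $K$-rational data even though individual linear factors of $p$ may only live in $K_{\alg}$. The first ingredient relies on the closure properties of the rings $A_{m,n}$; the second on preparation together with the strong Noetherian property; and the third on Galois averaging, using the unique extension of the analytic $\mathcal{A}$-structure from $K$ to $K_{\alg}$ (Theorem 4.5.11 of \cite{CLip}) to make sense of the algebraic data in the first place.
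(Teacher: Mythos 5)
The first thing to note is that the paper does not prove this statement at all: Theorem \ref{thm:terms} is imported verbatim from \cite[Theorem 5.5.3]{CLip} and used as a black box, so there is no in-paper proof to compare your argument against. Your sketch does follow the general lines of the argument in \cite{CLip} (induction on the complexity of terms, Weierstrass preparation, subdivision of annuli according to dominant factors, descent of algebraic data to $K$), so as a road map it points in the right direction.

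As a proof, however, it has genuine gaps, which you partly acknowledge in your final paragraph. The central case $t = f(t_1,\dots,t_m,s_1,\dots,s_n)$ is where all the work lies, and the two steps you describe as ``applying Weierstrass preparation'' and ``ensuring the resulting series is preregular in the active variable'' are precisely the hard part: in the separated setting one cannot in general achieve regularity in a chosen variable by a change of variables, because the $x$- and $\rho$-variables play different roles and preparation requires preregularity in the sense of Section \ref{subsec:separated}, which is obtained via the strong Noetherian property and a ring-theoretic normalization rather than by a substitution. You also do not address the fact that the interpretation of $f \in A_{m,n}$ is $0$ off $(K^\circ)^m\times(K^{\circ\circ})^n$, so the induction must track where the subterms take values in $K^\circ$ versus $K^{\circ\circ}$ and partition accordingly; nor why the resulting unit lies in $\mathcal{O}^\dagger(\mathcal{U}_i)$ rather than merely in the larger ring of analytic functions on the annulus, and why it is strong in the precise sense of the definition (constant valuation together with the residue satisfying a nonzero polynomial). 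These are not bookkeeping issues but the substance of Sections 5.1--5.5 of \cite{CLip}. If the intent is to cite the result, the citation suffices; if the intent is to reprove it, the argument as written is an outline rather than a proof.
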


Definitions of $K$-annuli and the $K$-algebras $\mathcal{O}^\dagger(\mathcal{U}_i)$ and $\mathcal{O}^\sigma(\mathcal{U}_i)$ can be found in \cite[Section 5.1]{CLip}. There are two key points for us when applying this theorem. The first is that the $K$-annuli are quantifier-free definable in $\mL_{\Div}$. The second is the definition of strong unit, which we repeat below. 

\begin{definition} A unit $f^\sigma$ in $\cO_K^{\sigma}(\varphi)$ is called a \emph{strong unit} if 
\begin{enumerate}
\item there is some $\ell\in \N$ and $c\in K^\times$ such that $v(f^\sigma(x)^\ell)=v(c)$ for all $x\in \cU_\varphi$ and   
\item there exists a non-zero polynomial $P(\xi)\in \widetilde{K}[\xi]$ such that $P(\res(c^{-1}f^\sigma(x)^\ell))=0$ for all $x\in \cU_\varphi$. 
\end{enumerate}
\end{definition}
When the value group is divisible we can always take $\ell=1$ in the previous definition.

\subsection{$C$-minimality of algebraically closed fields with separated analytic structure}\label{sec:C-min}

The following result is morally attributed to Robinson. 

\begin{theorem}[Robinson]\label{fact:Cmin} The theory of algebraically closed valued fields has quantifier elimination in $\mL_{\Div}$. In particular, algebraically closed valued fields are $C$-minimal. 
\end{theorem}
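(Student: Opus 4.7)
The plan is to prove the two assertions in sequence: first quantifier elimination for the theory of algebraically closed non-trivially valued fields in $\mL_{\Div}$, and then deduce $C$-minimality by inspecting the shape of quantifier-free formulas in one free variable.

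For quantifier elimination I would apply the standard embedding (or back-and-forth) test: it suffices to show that whenever $(M_1,\mL_{\Div})$ and $(M_2,\mL_{\Div})$ are two sufficiently saturated algebraically closed non-trivially valued fields sharing a common $\mL_{\Div}$-substructure $A$ (which is an integral domain equipped with a valuation subring inherited from both sides), any $\mL_{\Div}$-embedding $A\hookrightarrow M_2$ extends to an embedding of $M_1$ into $M_2$. The extension is built in three classical steps, all well-known for valued fields: first pass from $A$ to its fraction field (valuations extend uniquely to fraction fields); next realise purely transcendental extensions inside $M_2$ using Chevalley's extension theorem, which lets us specify the valuation of a transcendental element to be any prescribed element of the value group together with any prescribed residue (using non-triviality and divisibility/algebraic closedness of the value group and residue field in $M_i$, both of which are consequences of $M_i$ being algebraically closed); finally, handle algebraic extensions by observing that the valuation on an algebraic closure is uniquely determined up to conjugation. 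A saturation argument combines these steps. This gives Robinson's theorem.

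To deduce $C$-minimality, let $X\subseteq K'$ be definable in some $(K',\mL_{\Div})\equiv (K,\mL_{\Div})$. By the quantifier elimination just proved, $X$ is defined by a boolean combination of atomic formulas of the form $p(x)=0$ and $\Div(p(x),q(x))$, i.e.\ $v(p(x))\leqslant v(q(x))$, for polynomials $p,q$ over $K'$. The first kind defines a finite set, which is trivially a boolean combination of (closed) balls of radius $0$. For the second kind, since $K'$ is algebraically closed we may factor $p(x)=c\prod_{i=1}^{m}(x-a_i)$ and $q(x)=d\prod_{j=1}^{n}(x-b_j)$, so that the formula becomes
\[
v(c)\prod_{i=1}^{m}v(x-a_i)\leqslant v(d)\prod_{j=1}^{n}v(x-b_j).
\]
Let $S=\{a_1,\ldots,a_m,b_1,\ldots,b_n\}$. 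For each pair $s,t\in S$ the set $\{x:v(x-s)\leqslant v(x-t)\}$ is the complement of an open ball, so by partitioning $K'$ according to the ordering of the values $v(x-s)$ for $s\in S$ we obtain a partition of $K'$ into finitely many pieces, each of which is already a boolean combination of balls. On each such piece the value $v(x-s)$ is controlled by $v(x-s^*)$ where $s^*$ is the closest point of $S$ to $x$, and a short calculation shows that the defining inequality either holds identically or fails identically or is equivalent to a single condition of the form $v(x-s^*)\lesseqgtr \gamma$ for some $\gamma$ in the value group, i.e.\ membership in (the complement of) a ball. Taking the union over pieces and combining boolean operations across atomic formulas, $X$ is a finite boolean combination of balls.

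The most delicate step is the case analysis in the last paragraph: one must be careful that the partition of $K'$ according to \textquotedblleft the nearest root\textquotedblright\ really is definable as a boolean combination of balls (which it is, since each ordering $v(x-s)\leqslant v(x-t)$ cuts out the complement of an open ball centred on the midpoint's ball), and that within a cell the product inequality truly collapses to a single valuative condition on the closest root. The quantifier elimination step itself is mostly bookkeeping given the classical extension theorems for valuations, so the conceptual work is concentrated in reducing the atomic formula $v(p(x))\leqslant v(q(x))$ to a swiss-cheese type description.
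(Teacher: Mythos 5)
The paper does not actually prove this statement: it is quoted as a classical theorem of Robinson (with the $C$-minimality consequence recorded again as Corollary \ref{coro:Cmin}), so there is no in-paper proof to compare yours against. Your proposal supplies the standard argument, and its second half --- deducing $C$-minimality from quantifier elimination --- is essentially correct and complete in outline: $p(x)=0$ gives a finite set; for $v(p(x))\leqslant v(q(x))$ you factor over the algebraically closed field, and your identification of $\{x: v(x-s)\leqslant v(x-t)\}$ with the complement of the open ball of radius $v(s-t)$ about $t$ is exactly right; on each cell of the resulting partition one has $v(x-s)=\max\{v(x-s^{\ast}),v(s-s^{\ast})\}$ for the nearest root $s^{\ast}$, so after one further refinement by the balls $v(x-s^{\ast})<v(s-s^{\ast})$ the inequality collapses to a comparison between a power of $v(x-s^{\ast})$ and a constant, which is a ball condition because the value group is divisible.

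The gap is in the quantifier-elimination half. Your three extension steps (fraction field; ``new value or new residue'' via Chevalley; algebraic extensions via conjugacy of prolongations of the valuation) omit the case of an \emph{immediate} transcendental extension: an element $a\in M_1$ for which the set of values $v(a-c)$, with $c$ ranging over the algebraically closed subfield $F$, has no attained extremum, so that $F(a)/F$ enlarges neither the value group nor the residue field. This case genuinely occurs and is not a combination of the other two; handling it requires either Kaplansky's theory of pseudo-Cauchy sequences of transcendental type, or a direct argument that the function $c\mapsto v(a-c)$ determines the isomorphism type of the valued field $F(a)$ over $F$ and that the corresponding quantifier-free type is finitely satisfiable, hence realized in the saturated $M_2$. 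As written, ``Chevalley's theorem lets us prescribe the value and the residue'' does not reach this case. Since the result is classical the omission is repairable rather than fatal, but it is precisely where the substance of Robinson's theorem lies.
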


\begin{corollary}\label{coro:Cmin} Algebraically closed valued fields are $C$-minimal. 
\end{corollary}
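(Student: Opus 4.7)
The plan is to establish $C$-minimality for algebraically closed valued fields with separated analytic $\cA$-structure by showing that in any elementarily equivalent structure $(L,\mL_\cA)$, every definable subset $X \subseteq L$ is a finite Boolean combination of (open and closed) balls. First I would invoke the one-sorted quantifier elimination for $\Th(K,\mL_\cA)$ from \cite[Theorem 4.5.15]{CLip} to reduce to the case where $X$ is defined by a quantifier-free $\mL_\cA$-formula. Since Boolean combinations of balls are closed under Boolean operations, it suffices to analyse atomic conditions of the form $t(x)=0$ and $\Div(t(x), s(x))$ for $\mL_\cA$-terms $t,s$.

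Next I would transfer the analytic structure to $L$ by the standard construction producing a separated analytic $\cA(L)$-structure, as reviewed in \cite[Section 4.5]{CLip}, so that Theorem \ref{thm:terms} becomes applicable over $L$. Applied to each term $t(x)$, this yields a finite set $S \subseteq L^\circ$ and a finite cover of $L^\circ$ by $L$-annuli $\cU_i$ on which $t(x) = R_i(x) \cdot E_i^\sigma(x)$, with $R_i \in L(x)$ rational and $E_i^\sigma$ a strong unit. The $L$-annuli are quantifier-free definable in $\mL_{\Div}$, hence by Robinson's elimination (Theorem \ref{fact:Cmin}) they are Boolean combinations of balls; the finite set $S$ and the complement of $L^\circ$ are trivially so.

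The crux is that on each $\cU_i$, the strong-unit factor contributes nothing nontrivial to valuation or zero conditions. Since $L$ is algebraically closed, $\Gamma_L$ is divisible, so the defining relation $v(E_i^\sigma(x))^\ell = v(c)$ forces $v(E_i^\sigma(x))$ to equal the unique $\ell$-th root of $v(c)$ in $\Gamma_L$, constantly on $\cU_i$. In particular $E_i^\sigma$ is nowhere zero, so $\{x \in \cU_i\setminus S : t(x)=0\} = \{x \in \cU_i\setminus S : R_i(x)=0\}$ is finite, and $\{x\in \cU_i\setminus S : \Div(t(x),s(x))\}$ becomes a condition on the valuations of the two rational functions $R_i$ and its analogue for $s$, multiplied by known constants in $\Gamma_L$. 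Such sets are Boolean combinations of balls by Theorem \ref{fact:Cmin}. Assembling these pieces over the finite cover and the finite set $S$ yields the desired description of $X$.

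The main obstacle I anticipate is bookkeeping rather than conceptual: one must handle atomic conditions involving several distinct terms $t_1,\dots,t_k$ simultaneously, which requires taking a common refinement of the finite covers supplied by Theorem \ref{thm:terms} (still $\mL_{\Div}$-definable, hence still a Boolean combination of balls), and one must treat the degenerate case where some term $t$ has $^{-1}$ applied internally, where one reduces to the non-degenerate case by splitting according to whether the arguments of $^{-1}$ vanish. Once these reductions are made, the strong-unit property of the $E_i^\sigma$ makes the analytic data completely transparent, and the result follows from the algebraic $C$-minimality of $\RCVF$'s algebraically closed counterpart.
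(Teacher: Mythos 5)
You have proven the wrong statement. Corollary \ref{coro:Cmin} concerns \emph{plain} algebraically closed valued fields in the language $\mL_{\Div}$, with no analytic structure; it is simply the ``in particular'' clause of Theorem \ref{fact:Cmin} isolated for later citation, and the paper gives it no separate proof because it is an immediate consequence of Robinson's quantifier elimination. What you have written out is, almost verbatim, the paper's proof of the much later and much stronger Theorem \ref{thm:Cmin} ($C$-minimality of algebraically closed valued fields \emph{with separated analytic $\cA$-structure}), complete with the reduction to $\mL_{\cA(L)}$-terms, Theorem \ref{thm:terms}, $L$-annuli, and strong units. None of that machinery is relevant to the corollary as stated.

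Worse, as a proof of the corollary your argument is circular: at the final step you dispose of the conditions $R_i(x)=0$ and $\Div(R_i(x),R_i'(x))$ on rational functions by citing Theorem \ref{fact:Cmin}, i.e.\ by invoking exactly the $C$-minimality of $\mathrm{ACVF}$ that the corollary asserts. The one piece of genuine content the corollary requires is never supplied: after quantifier elimination in $\mL_{\Div}$, one must check that the atomic formulas $p(x)=0$ and $v(p(x))\leqslant v(q(x))$ with $p,q\in L[x]$ each define a finite boolean combination of balls. This is seen by factoring $p$ and $q$ into linear factors over the algebraically closed field $L$, so that $v(p(x))=v(c)\prod_j v(x-a_j)$, and then observing that a condition $\prod_j v(x-a_j)\leqslant \gamma \prod_k v(x-b_k)$ cuts out a boolean combination of balls centered at the $a_j$ and $b_k$ (a standard ``swiss cheese'' analysis). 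That two-line argument is the intended proof; your proposal replaces it with an argument that presupposes it.
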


\begin{theorem}[{\cite[Theorem 4.5.15]{CLip}}]\label{thm:QE_ACVF_an} Let $K$ be an algebraically closed valued field with separated analytic $\cA$-structure. Then $K$ admits elimination of quantifiers in $\mL_{\cA}$.
\end{theorem}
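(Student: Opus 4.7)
The plan is to mirror the proof of Theorem \ref{thm:QE_an} but in the simpler algebraically closed setting, where the underlying algebraic theory is ACVF rather than RCVF. By Theorem \ref{fact:Cmin}, ACVF admits quantifier elimination in $\mL_{\Div}$, so one can run exactly the reduction scheme of Denef--van den Dries \cite{DD} and Lipshitz \cite{L}: it suffices to eliminate one block of quantifiers from a quantifier-free formula whose terms are in ``Weierstrass-friendly'' shape. Concretely, I would prove a Basic Lemma stating that for any quantifier-free $\mL_{\cA}$-formula $\varphi(x,\lambda,y,\rho)$ in which $^{-1}$ is applied only to terms not involving $y$ or $\rho$, the formula $\exists y\, \exists \rho\, \varphi(x,\lambda,y,\rho)$ is equivalent to a quantifier-free $\mL_{\cA}$-formula $\psi(x,\lambda)$. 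The deduction of the full quantifier elimination from this Basic Lemma is then identical to \cite[3.8.5]{L}, using Theorem \ref{fact:Cmin} to handle the atomic formulas that do not involve the analytic symbols.

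For the Basic Lemma itself, I would first put $\varphi$ into a normal form whose atoms are of the form $f_i = 0$, $v(f_i) = 1$, or $v(f_i) < v(f_j)$, with each $f_i = a_i(x,\lambda)\,g_i(x,\lambda,y,\rho)$, $a_i$ an $\mL_{\cA}$-term, and $g_i \in A_{M,N}$ preregular of degree $(\mu_{i,0}, \nu_{i,0})$; note that, in contrast to the real closed case, there are no order atoms $0 < f_i$. I would then induct on $(M,N)$. When $N = 0$, the argument of \cite[3.13 Case~1]{L} applies verbatim. When $N > 0$, I would split into subcases based on whether some leading coefficient $b_{i,\nu_{i,0}}$ in the expansion $g_i = \sum_\nu b_{i,\nu}(x,\lambda,y)\,\rho^\nu$ has valuation $<1$ or all of them have valuation $1$: in the first case, a Weierstrass division reduces the $f_i$ to elements polynomial in $y_M$, which lets us apply Theorem \ref{fact:Cmin} and the inductive hypothesis; in the second case, we introduce an auxiliary variable $y_{M+1}$ with $v(y_{M+1}) = 1$ and $y_{M+1} \prod_i b_{i,\nu_{i,0}} = 1$, and replace each $f_i$ by a term $f_i'$ which is preregular of degree $(0,\nu_{i,0})$, so that the Weierstrass preparation of \cite[Section~4.2]{CLip} lowers the number of $\rho$-variables.

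The main obstacle, as in the real closed case, is the bookkeeping needed to maintain the preregularity hypothesis through the inductive step and to control what happens along the ``boundary'' of the valuation ring via Weierstrass preparation; but precisely because there is no order to track, Case~2b simplifies considerably, and one does not need to split according to subsets $B \in \mathcal{P}(I_2)$ or to flip signs of the transformed terms. Once the Basic Lemma is established, the overall argument terminates cleanly, yielding quantifier elimination for $\Th(K,\mL_{\cA})$.
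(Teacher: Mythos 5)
This theorem is not proved in the paper: it is quoted verbatim from \cite[Theorem~4.5.15]{CLip} and used as a black box, so there is no in-paper argument to compare against. That said, your sketch faithfully reproduces the standard proof strategy from the literature --- the Denef--van den Dries/Lipshitz reduction to a Basic Lemma, induction on $(M,N)$, Weierstrass division in Case~2a, and the auxiliary unit $y_{M+1}\prod_i b_{i,\nu_{i,0}}=1$ in Case~2b --- which is exactly the template (from \cite{L}) that the authors adapt, with the extra sign-tracking over $\mathcal{P}(I_2)$, to prove their Theorem~\ref{thm:QE_an} in the real closed case; your observation that Case~2b collapses when there is no order is the correct point of contrast. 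The one thing your outline elides is that \cite[Theorem~4.5.15]{CLip} holds for arbitrary separated analytic $\cA$-structures in any rank, so the Weierstrass division and strong Noetherian properties you invoke are not free but are precisely the axiomatic content of \cite[Sections~4.1--4.2]{CLip}; as a self-contained proof your text would need to carry out Case~1 and the preregularity bookkeeping in that abstract setting rather than deferring to \cite[3.13]{L}, which is written for rank-one separated power series.
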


 \begin{theorem}\label{thm:Cmin} Let $K$ be an algebraically closed valued field with analytic $\cA$-structure. Then, $(K,\mL_\cA)$ is $C$-minimal.  
\end{theorem}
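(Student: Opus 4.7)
The plan is to combine the quantifier elimination of Theorem \ref{thm:QE_ACVF_an} with the term-structure result Theorem \ref{thm:terms} and Robinson's $C$-minimality of algebraically closed valued fields in $\mL_\Div$ (Corollary \ref{coro:Cmin}): on each piece of a suitable finite cover, an $\mL_\cA$-atomic formula in one variable will collapse to an $\mL_\Div$-atomic formula, after which Corollary \ref{coro:Cmin} will finish the job.

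Concretely, I would take an arbitrary $(L,\mL_\cA)\equiv(K,\mL_\cA)$ and a set $X\subseteq L$ definable with parameters, enrich the analytic structure to $\cA(L)$ as in \cite[Section~4.5]{CLip}, and apply Theorem \ref{thm:QE_ACVF_an} to write $X$ via a quantifier-free $\mL_{\cA(L)}$-formula. Expanding as a boolean combination of atomic formulas $t(x)=0$ and $\Div(t_1(x),t_2(x))$ and noting that the class of boolean combinations of balls is closed under boolean operations, the problem reduces to handling each atomic case. I would split into $x\in L^\circ$ and $x\in L\setminus L^\circ$, treating the second region by the substitution $x=1/y$ with $y\in L^{\circ\circ}$ (legitimate since ${}^{-1}$ is in $\mL_\cA$). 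Applying Theorem \ref{thm:terms} simultaneously to the finitely many terms of the formula, I obtain a common finite $S\subseteq L_\alg^\circ$ and a common finite cover of $L_\alg^\circ$ by $L$-annuli $\cU_i$ such that on each $\cU_i\setminus S$ every term $t$ equals $R_t(x)\cdot E_t^\sigma(x)$ with $R_t\in L(x)$ rational and $E_t^\sigma$ a strong unit. Since $L$-annuli are quantifier-free $\mL_\Div$-definable, intersecting with $L$ produces an $\mL_\Div$-definable partition of each region (outside the finite set $S\cap L$) on which every term has this shape.

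On each piece the translation to $\mL_\Div$ is now immediate: $t(x)=0$ is equivalent to $R_t(x)=0$ because $E_t^\sigma$ is a nowhere-zero unit, and $\Div(t_1(x),t_2(x))$ is equivalent to $\Div(c_{t_1}R_{t_1}(x),c_{t_2}R_{t_2}(x))$, where $c_{t_i}\in L^\times$ is chosen with $v(E_{t_i}^\sigma(x))=v(c_{t_i})$ throughout the piece; such a $c_{t_i}$ exists because divisibility of the value group lets us take $\ell=1$ in the strong-unit condition, so that $v(E_{t_i}^\sigma)$ is genuinely constant on $\cU_i$. Putting the pieces together, $X$ is $\mL_\Div$-definable over $L$ and hence, by Corollary \ref{coro:Cmin}, a boolean combination of balls. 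The only step that is more than routine bookkeeping is precisely this translation of $\Div(t_1,t_2)$ into $\mL_\Div$ via the strong-unit property and divisibility of the value group; once that observation is in place, everything else follows from quantifier elimination and the finite-cover decomposition of Theorem \ref{thm:terms}.
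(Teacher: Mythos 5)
Your proposal is correct and follows essentially the same route as the paper's proof: quantifier elimination (Theorem \ref{thm:QE_ACVF_an}) to reduce to atomic formulas, Theorem \ref{thm:terms} to write each term as a rational function times a strong unit on a finite cover by annuli, and the strong-unit property (with $\ell=1$ by divisibility of the value group) to collapse each atomic condition to an $\mL_{\Div}$-condition handled by Corollary \ref{coro:Cmin}. The only difference is cosmetic: you spell out the reduction to $x\in L^\circ$ via the substitution $x=1/y$, which the paper dispatches with a ``without loss of generality.''
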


\begin{proof}
It suffices to show that in every elementary extension of $(K,\mathcal{L}_\mathcal{A})$ definable sets in one variable are finite boolean combinations of balls. Let $L$ be any such extension and $X\subseteq L$ be a definable set. Let $x$ be a variable, $\varphi(x,y)$ be an $\mathcal{L}_{\mathcal{A}}$-formula and $a\in L^{|y|}$ be such that $X$ is defined by $\varphi(x,a)$. Without loss of generality we may suppose that $x$ varies over $L^\circ$ and that $a\in (L^\circ)^{|y|}$. Each term $t(x,a)$ in $\varphi$ defines the same function as an $\mathcal{L}_{\mathcal{A}(L)}$-term $t_a'(x)$. Let $\varphi'(x)$ be the corresponding $\mathcal{L}_{\mathcal{A}(L)}$-formula where every term $t(x,a)$ in $\varphi(x,a)$ is replaced by $t_a'(x)$. Note that $\varphi$ and $\varphi'$ define $X$. By Theorem \ref{thm:QE_ACVF_an} applied to $\mathrm{Th}(L,\mathcal{L}_{\mathcal{A}(L)})$, we may assume that $\varphi'(x)$ is an atomic $\mathcal{L}_{\mathcal{A}(L)}$-formula. We are reduced then to the following three cases for $t(x),s(x)$ two $\mathcal{L}_{\mathcal{A}(L)}$-terms
\begin{enumerate}
\item $\varphi'$ is of the form $t(x)=0$;
\item $\varphi'$ is of the form $v(t(x))\leqslant v(s(x))$. 
\end{enumerate} 

By Theorem \ref{thm:terms}, we may further suppose that, after possibly partitioning $L^\circ$ into a definable finite partition of $L$-annuli $\cU_i$, there is a finite set $S\subseteq L^\circ$ such that on each annulus $\cU_i$ we have  
\[
t(x)\vert_{\mathcal{U}_i \setminus S}=(R_i(x) \cdot E_i^\sigma) \vert_{\mathcal{U}_i \setminus S}(x) \text{ and }  s(x)\vert_{\mathcal{U}_i \setminus S}=(R_i'(x) \cdot F_i^\sigma) \vert_{\mathcal{U}_i \setminus S}(x), 
 \]
where $R_i, R_i'$ are rational functions over $L$ and $E_i^{\sigma}, F_i^{\sigma}\in \mathcal{O}_L^{\sigma}(\mathcal{U}_i)$ are strong units. Note that $L$-annuli are finite boolean combinations of balls. We split in cases: 

$(1)$ Suppose that $\varphi'$ is of the form $t(x)=0$. Let $c\in L$ be such that $v(E_i^{\sigma})=v(c)$ for all $x\in \cU_i$. Then, the formula $\varphi'$ restricted to $X\setminus S$, is equivalent to $R_i(x)=0$, which defines a finite set of points. 

$(2)$ Suppose that $\varphi'$ is of the form $v(t(x))\leqslant v(s(x))$. Since both $E_i^\sigma$ and $F_i^\sigma$ are strong units, there are $b,c\in L$ such that for all $x\in \cU_i$, $v(E_i^\sigma(x))=v(c)$ and $v(F_i^\sigma(x))=v(b)$. Then, we have that $\varphi$, restricted to $X\setminus S$, is equivalent to $v(c)v(R_i(x))\leqslant v(b)v(R_i'(x))$, and the result follows from Corollary \ref{coro:Cmin}. 
\end{proof}

\subsection{Weak o-minimality}\label{sec:weak-o}

Let $K$ be a real closed valued field with separated $\cA$-analytic structure. The proof that the theory $\Th(K,\mathcal{L}_{\cA}^{\leqslant})$ is weakly o-minimal follows the same strategy as the proof of Theorem \ref{thm:Cmin}. In particular, we use the following corollary which is an easy consequence of Theorem \ref{thm:QE}. 

\begin{corollary}\label{cor:RCVF_weak} Every real closed valued field is weakly o-minimal ($\RCVF$ is weakly o-minimal).  
\end{corollary}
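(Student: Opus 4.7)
The plan is to derive the corollary as a direct consequence of Theorem \ref{thm:QE}. Let $K$ be a model of $\RCVF$ and let $X \subseteq K$ be a definable subset. By quantifier elimination, $X$ is defined by a Boolean combination of atomic $\mathcal{L}_{\Div}^{\leqslant}$-formulas in one variable, which come in three flavours: polynomial equations $p(x) = 0$, polynomial inequalities $p(x) \leqslant q(x)$, and divisibility predicates $v(p(x)) \leqslant v(q(x))$, for $p,q \in K[x]$. Since the class of finite unions of points and convex subsets of $K$ is closed under finite unions, finite intersections and (in a totally ordered structure) complements, it suffices to verify the statement for each atomic type separately.

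For polynomial equations and polynomial order inequalities the claim is just the classical o-minimality of the underlying real closed field $K$. The main content of the argument is therefore the divisibility predicate, where one has to combine the order-theoretic and valuation-theoretic information. Here I would exploit the convexity of the valuation: since $K^\circ$ is a convex subset of $K$ by hypothesis, every closed ball $\{x : v(x-a) \leqslant \gamma\}$ equals a translate $a + c\cdot K^\circ$ (with $v(c)=\gamma$) and is thus a convex interval, and similarly for open balls. Rewriting
\[
\{x : v(p(x)) \leqslant v(q(x))\} = \{q=0\}\ \cup\ \bigl\{x : q(x)\neq 0,\ (p/q)(x) \in K^\circ\bigr\},
\]
the problem reduces to computing the preimage of the convex set $K^\circ$ under the rational function $p/q$. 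Off the finite zero locus of $q$ the function $p/q$ has only finitely many maximal intervals of monotonicity (its derivative is a rational function with finitely many zeros), and on each such interval the preimage of a convex set is convex.

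Combining the atomic cases yields that every definable $X \subseteq K$ is a finite union of points and convex sets, and since the argument is uniform, the theory $\RCVF$ itself is weakly o-minimal. The only non-routine step is the analysis of the divisibility predicate, where I expect the main (very mild) obstacle to be the bookkeeping around the zeros of $q$ and the monotonicity decomposition of $p/q$; convexity of the valuation is precisely the feature of $\RCVF$ that makes even this step routine.
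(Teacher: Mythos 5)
Your proposal is correct and is exactly the argument the paper has in mind: the corollary is stated there as an immediate consequence of Theorem \ref{thm:QE}, obtained precisely by reducing to atomic $\mathcal{L}_{\Div}^{\leqslant}$-formulas, using o-minimality of the real closed field for the order and equality atoms, and convexity of the valuation ring together with piecewise monotonicity of rational functions for the divisibility atom. One small slip worth noting: your displayed identity for $\{x : v(p(x))\leqslant v(q(x))\}$ wrongly includes the points where $q(x)=0$ but $p(x)\neq 0$ (there $v(q(x))=0<v(p(x))$ in the paper's multiplicative convention), but since this is a finite set it has no effect on weak o-minimality.
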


Because of the ordering of $K$, we need some further lemmas. 

\begin{lemma}\label{lem:traces} Let $K$ be a real closed valued field with separated $\cA$-analytic structure. Let $X\subseteq K_\alg$ be an $\mL_{\cA}$-definable set. Then $X\cap K$ is $\mL_{\Div}$-definable. In particular, if $\cU_\varphi$ is a $K$-annulus, $\cU_\varphi(K)$ (its trace on $K$) is $\mL_{\Div}$-definable. 
\end{lemma}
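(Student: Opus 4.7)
The plan is to reduce the lemma to Theorem~\ref{thm:Cmin} applied to $K_\alg$. By \cite[Theorem~4.5.11]{CLip} the separated analytic $\cA$-structure on $K$ extends uniquely to one on $K_\alg$, so $(K_\alg, \mL_\cA)$ is an algebraically closed valued field with separated analytic $\cA$-structure and is therefore $C$-minimal by Theorem~\ref{thm:Cmin}. Consequently the $\mL_\cA$-definable set $X \subseteq K_\alg$ is a finite boolean combination of balls (open or closed) of $K_\alg$.

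Next I would analyze the trace of a single $K_\alg$-ball on $K$. Since $K_\alg = K(i)$, every $a \in K_\alg$ has a decomposition $a = b + ci$ with $b, c \in K$, and a short calculation using the identity $v(u+wi)^2 = v((u+wi)(u-wi)) = v(u^2+w^2)$ together with the RCVF-fact that $v(p+q) = \max(v(p), v(q))$ for non-negative $p, q$ yields $v(x - a) = \max(v(x - b), v(c))$ for $x \in K$. Hence the trace on $K$ of the ball $\{y \in K_\alg : v(y-a) \star \gamma\}$ (with $\star \in \{<, \leqslant\}$) is either empty (when $v(c) \star \gamma$ fails) or the $K$-ball $\{x \in K : v(x-b) \star \gamma\}$ around $b\in K$. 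Note also that the value groups of $K$ and $K_\alg$ coincide (both being divisible), so the radius $\gamma$ automatically lies in $v(K^\times)$. In either case the trace is quantifier-free $\mL_\Div$-definable over $K$.

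Combining the two steps, $X \cap K$ is a finite boolean combination of $\mL_\Div$-definable subsets of $K$, hence $\mL_\Div$-definable. The ``in particular'' assertion is then a direct consequence, though it also follows independently from the first of the two ``key points'' recorded after Theorem~\ref{thm:terms}: $K$-annuli are already quantifier-free $\mL_\Div$-definable over $K$, so their trace on $K$ is $\mL_\Div$-definable. The only point requiring care is the valuation computation on $K_\alg = K(i)$ and the verification that the trace of a $K_\alg$-ball on $K$ is either empty or a $K$-ball centered at a point of $K$; both are routine consequences of the unique extension of the valuation to a degree-$2$ algebraic extension, so I do not expect any serious obstacle.
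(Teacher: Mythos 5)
Your proposal is correct and follows essentially the same route as the paper: both reduce via $C$-minimality of $(K_\alg,\mL_{\cA})$ (Theorem \ref{thm:Cmin}) to the case of a single ball and then show that its trace on $K$ is empty or a $K$-ball, with the ``in particular'' clause handled by the quantifier-free $\mL_{\Div}$-definability of $K$-annuli. The only (minor) difference is in the single-ball step: the paper simply observes that if $b\in X\cap K$ then $b$ is itself a center of the ball $X$, so $X\cap K=\{x\in K\mid v(x-b)\square\gamma\}$, whereas you carry out the explicit (and correct) valuation computation on $K_\alg=K(i)$ with $a=b+ci$.
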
 

\begin{proof} By Theorem \ref{thm:Cmin}, $X$ is a finite boolean combination of balls, so we may suppose that $X$ is a ball. If it is a single point, the result is trivial, so we may suppose that 
$X=\{x\in K_\alg\mid v(x-a)\square \gamma \}$ where $\gamma\in v(K_\alg)=v(K)$ and $\square$ is either $\leqslant$ or $<$. Suppose that $X\cap K\neq \emptyset$ and let $b\in X\cap K$. Then $X=\{x\in K_\alg\mid v(x-b)\square \gamma \}$, and thus $B\cap K=\{x\in K\mid v(x-b)\square \gamma \}$. The last statement follows since $K$-annuli are $\mL_{\Div}$-definable. 
\end{proof}

\begin{lemma}\label{lem:sign_change} Let $\varphi$ be a $K$-annulus formula and $f^\sigma$ be a strong unit in $\cO_K^{\sigma}(\varphi)$. Then there is a finite partition of $\cU_\varphi$ into finitely many $\mL_{\Div}^{\leqslant}$-definable sets $X_1,\ldots, X_n$ such that either $X_i\cap K$ is empty or $f^\sigma_{|X_i\cap K}$ has constant sign. 
\end{lemma}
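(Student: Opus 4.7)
The plan is to control the sign of $f^\sigma$ by examining the residue of $c^{-1}f^\sigma(x)$, which by the strong unit property can take only finitely many values, and then to upgrade the resulting $\mL_\cA$-definable partition to an $\mL_{\Div}^{\leqslant}$-definable one via the $C$-minimality already established. Since $K$ is real closed, its value group is divisible, so by the remark following the definition of strong unit I may assume $\ell=1$. Thus there is $c\in K^\times$ with $v(f^\sigma(x))=v(c)$ for all $x\in\cU_\varphi$, and a non-zero polynomial $P(\xi)\in\widetilde{K}[\xi]$ with $P(\res(c^{-1}f^\sigma(x)))=0$ for all such $x$. Let $r_1,\ldots,r_n\in\widetilde{K_\alg}$ denote the roots of $P$ and fix lifts $\tilde{r}_j\in K_\alg^\circ$, chosen in $K^\circ$ whenever $r_j\in\widetilde{K}$. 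The $\mL_\cA$-definable sets
\[
X_j:=\{x\in\cU_\varphi:v(c^{-1}f^\sigma(x)-\tilde{r}_j)<1\}
\]
partition $\cU_\varphi$, since $\res(c^{-1}f^\sigma(x))$ coincides with exactly one of the $r_j$ for each $x$.

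The hard part will be to show that each $X_j$ is actually $\mL_{\Div}^{\leqslant}$-definable and not merely $\mL_\cA$-definable. For this I would invoke Theorem \ref{thm:Cmin}: each $X_j$ is a one-variable $\mL_\cA$-definable subset of $K_\alg$, hence a finite Boolean combination of balls, and therefore $\mL_{\Div}$-definable (a fortiori $\mL_{\Div}^{\leqslant}$-definable). This is the pivotal step, where the analytic character of $f^\sigma$ disappears from the description of $X_j$.

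Finally, for the sign analysis on each $X_j\cap K$: if $r_j\notin\widetilde{K}$, then $X_j\cap K=\emptyset$ because residues of elements of $K^\circ$ lie in $\widetilde{K}$. If $r_j\in\widetilde{K}$, then $v(\tilde{r}_j)=1$ (since $f^\sigma$ being a unit forces $r_j\neq 0$), and for $x\in X_j\cap K$ I write $f^\sigma(x)=c\tilde{r}_j+c\delta$ with $\delta:=c^{-1}f^\sigma(x)-\tilde{r}_j\in K^{\circ\circ}$. Then $v(c\delta)<v(c)=v(c\tilde{r}_j)$; since the valuation is convex with respect to the ordering, this yields $|c\delta|<|c\tilde{r}_j|$, and hence $\mathrm{sign}(f^\sigma(x))=\mathrm{sign}(c\tilde{r}_j)$, which depends only on $j$.
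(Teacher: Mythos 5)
Your proof is correct and takes essentially the same approach as the paper: partition $\cU_\varphi$ by the residue classes $\res(c^{-1}f^\sigma(x))=r_j$ for the roots $r_j$ of $P$, use Theorem \ref{thm:Cmin} to see each piece is a Boolean combination of balls (the paper additionally cites Lemma \ref{lem:traces} for the $\mL_{\Div}$-definability of the trace on $K$), and conclude constancy of sign on each piece from convexity of the valuation. Your explicit computation $f^\sigma(x)=c\tilde r_j+c\delta$ with $v(c\delta)<v(c\tilde r_j)$ just spells out the paper's remark that $\res^{-1}(a_i)\cap K$ has constant sign for nonzero $a_i\in\widetilde K$.
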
 

\begin{proof} Let $c\in K$ be such that $v(f^\sigma(x))=v(c)$ for all $x\in \cU_\varphi$ and $P(\xi)\in \widetilde{K}[\xi]$ be non-zero and such that $P(\res(c^{-1}f^\sigma(x)))=0$ for all $x\in \cU_\varphi$. Let $a_1,\ldots, a_n\in \widetilde{K}_\alg$ be all the distinct roots of $P$ and set $X_i:=\{x\in U_\varphi \mid \res(c^{-1}f^\sigma(x))=a_i\}$. By Theorem \ref{thm:Cmin}, $X_i$ is a finite boolean combination of balls and by Lemma \ref{lem:traces}, $X_i\cap K$ is $\mL_{\Div}$-definable. Note that if $a_i\notin \widetilde{K}$, then $X_i$ is empty. Moreover, as $v(f^\sigma(x))=v(c)$ for all $x\in \cU_\varphi$, no $a_i$ is $0$. Now, if $X_i\cap K$ is non-empty, then $f^\sigma$ cannot change sign on $X_i\cap \cU_\varphi$, as the sign of $\res^{-1}(a_i)\cap K$ is constant for every non-zero $a_i\in \widetilde{K}$. 
\end{proof}

\begin{theorem}\label{thm:weak-o-min_an}
The theory $\Th(K,\mathcal{L}_{\cA}^{\leqslant})$ is weakly o-minimal. 
\end{theorem}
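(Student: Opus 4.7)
The plan is to mirror the strategy of Theorem \ref{thm:Cmin}, with Lemma \ref{lem:sign_change} providing the one new ingredient needed to control signs.

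I would start with an arbitrary elementary extension $L \equiv K$ and a definable set $X \subseteq L$. Absorbing parameters into an $\cA(L)$-analytic structure as in \cite[Section 4.5]{CLip} and invoking the quantifier elimination of Theorem \ref{thm:QE_an}, it suffices to treat $X$ defined by an atomic $\mL_{\cA(L)}^{\leqslant}$-formula in a single variable $x$ ranging over $L^\circ$. Such a formula has one of three forms, for terms $t(x), s(x)$: (a) $t(x)=0$; (b) $v(t(x)) \leqslant v(s(x))$; (c) $0 \leqslant t(x)$.

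Next, I would apply Theorem \ref{thm:terms} inside $L_\alg$ to obtain a finite cover of $L_\alg^\circ$ by $L$-annuli $\cU_i$ and a finite set $S \subseteq L_\alg^\circ$ such that on each $\cU_i \setminus S$,
\[
t(x) = R_i(x)\cdot E_i^\sigma(x), \qquad s(x) = R_i'(x)\cdot F_i^\sigma(x),
\]
with $R_i, R_i' \in L(x)$ rational and $E_i^\sigma, F_i^\sigma$ strong units. By Lemma \ref{lem:traces} the traces $\cU_i \cap L$ are $\mL_{\Div}$-definable. Cases (a) and (b) then reduce, exactly as in the proof of Theorem \ref{thm:Cmin}, to $\mL_{\Div}^{\leqslant}$-conditions on the rational functions $R_i, R_i'$ over $L$.

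The genuinely new case is (c). Here I would invoke Lemma \ref{lem:sign_change} applied to $E_i^\sigma$ to refine $\cU_i$ into finitely many $\mL_{\Div}^{\leqslant}$-definable pieces on each of which $E_i^\sigma$, restricted to $L$, has a constant sign $\epsilon \in \{-1, +1\}$. On such a piece the condition $0 \leqslant t(x)$ becomes $0 \leqslant \epsilon \cdot R_i(x)$, again a purely $\mL_{\Div}^{\leqslant}$-condition on a rational function over $L$. With the analytic contribution thus stripped away, Corollary \ref{cor:RCVF_weak} finishes the argument: each restriction of $X$ to an $\mL_{\Div}^{\leqslant}$-definable piece of $L^\circ$ is a finite union of points and convex sets, and taking the union over the finite decomposition yields the same for $X$ itself. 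The main obstacle, as one would expect, is case (c): unlike in the $C$-minimal setting, order-atomic formulas cannot be handled uniformly on an annulus without knowing the sign behaviour of strong units on the real closed reduct, which is the raison d'être of Lemmas \ref{lem:traces} and \ref{lem:sign_change}.
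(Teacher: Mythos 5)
Your proposal is correct and follows essentially the same route as the paper's proof: parameter absorption into $\cA(L)$, quantifier elimination via Theorem \ref{thm:QE_an}, term decomposition on $L$-annuli via Theorem \ref{thm:terms}, Lemma \ref{lem:sign_change} to fix the sign of the strong units, and Corollary \ref{cor:RCVF_weak} to conclude. The only cosmetic difference is that the paper refines the annulus partition by Lemma \ref{lem:sign_change} once upfront rather than only in the order case, which changes nothing of substance.
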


\begin{proof}
It suffices to show that every elementary extension of $(K,\mathcal{L}_{\cA}^{\leqslant})$ is weakly o-minimal. Let $L$ be any such extension and $Z\subseteq L$ be a definable set. Let $x$ be a variable, $\varphi(x,y)$ be an $\mathcal{L}_{\cA}^{\leqslant}$-formula such that $Z$ is defined by $\varphi(x,a)$ for $a\in L^{|y|}$. We need to show that $Z$ is a finite union of convex sets and points. Without loss of generality we may suppose that $x$ varies over $L^\circ$ and that $a\in (L^\circ)^{|y|}$. Each term in $t(x,a)$ in $\varphi$ defines the same function as an $\mathcal{L}_{\mathcal{A}(L)}$-term $t_a'(x)$. Let $\varphi'(x)$ be the corresponding $\mathcal{L}_{\mathcal{A}(L)}^{\leqslant}$-formula where every term $t(x,a)$ in $\varphi(x,a)$ is replaced by $t_a'(x)$. Both $\varphi$ and $\varphi'$ define $Z$. By Theorem \ref{thm:QE_an} applied to $\mathrm{Th}(L,\mathcal{L}_{\mathcal{A}(L)}^{\leqslant})$, we may assume that $\varphi'(x)$ is an atomic $\mathcal{L}_{\mathcal{A}(L)}^{\leqslant}$-formula. We are reduced then to the following three cases for $t(x),s(x)$ two $\mathcal{L}_{\mathcal{A}(L)}$-terms
\begin{enumerate}
\item $\varphi'$ is of the form $t(x)=0$;
\item $\varphi'$ is of the form $0< t(x)$;
\item $\varphi'$ is of the form $v(t(x))\leqslant v(s(x))$. 
\end{enumerate} 

By Theorem \ref{thm:terms}, we may further suppose that, after possibly partitioning $L^\circ$ into a definable finite partition of $L$-annuli $\cU_i$, there is a finite set $S\subseteq L^\circ$ such that on each annulus $\cU_i$ we have that:  
\[
t(x)\vert_{\mathcal{U}_i \setminus S}=(R_i(x) \cdot E_i^\sigma) \vert_{\mathcal{U}_i \setminus S}(x) \text{ and }  s(x)\vert_{\mathcal{U}_i \setminus S}=(R_i'(x) \cdot F_i^\sigma) \vert_{\mathcal{U}_i \setminus S}(x), 
 \]
where $R_i, R_i'$ are rational functions over $L$ and $E_i^{\sigma}, F_i^{\sigma}\in \mathcal{O}_L^{\sigma}(\mathcal{U}_i)$ are strong units. Moreover, by Lemma \ref{lem:sign_change}, we may refine such a partition into a definable partition $\mathcal{P}$ such that for each $X\in \mathcal{P}$, if $X\subseteq \cU_i$, then neither $E_i^\sigma$ nor $F_i^{\sigma}$ restricted to $X$ change sign. Fix some $X\in \mathcal{P}$ such that $X\subseteq \cU_i$. We split in cases: 

$(1)$ Suppose that $\varphi'$ is of the form $t(x)=0$. Let $c\in L$ be such that $v(E_i^{\sigma})=v(c)$ for all $x\in \cU_i$. Then, the formula $\varphi'$ restricted to $X\setminus S$, is equivalent to $R_i(x)=0$, which defines a finite set of points. 

$(2)$ Suppose that $\varphi'$ is of the form $0<t(x)$. By construction the sign of $E_i^{\sigma}(x)$ does not change on $X$. Therefore, restricted to $X\setminus S$, $\varphi'$ is equivalent either to $0<R_i(x)$ or $R_i(x)<0$. The result follows then from Corollary \ref{cor:RCVF_weak}.

$(3)$ Suppose that $\varphi'$ is of the form $v(t(x))\leqslant v(s(x))$. Since both $E_i^\sigma$ and $F_i^\sigma$ are strong units, there are $b,c\in L$ such that for all $x\in \cU_i$, $v(E_i^\sigma(x))=v(c)$ and $v(F_i^\sigma(x))=v(b)$. Then, we have that $\varphi$, restricted to $X\setminus S$, is equivalent to $v(c)v(R_i(x))\leqslant v(b)v(R_i'(x))$, and again the result follows from Corollary \ref{cor:RCVF_weak}. 
\end{proof}

\section{Overconvergent real analytic structure}\label{sec:over}

Let $K$ be a rank one complete real closed valued field. Examples include the field of Puiseux series $K_0:=\bigcup_{n>0}\R(\!(t^{1/n})\!)$ and Hahn fields such as $\R(\!(t^{\Q})\!)$ or $\R(\!(t^\R)\!)$. For $x=(x_1,\ldots,x_n)$ let $K\langle x\rangle$ be the ring of converging power series over $K$, that is, those power series $\sum_{i} a_ix^i$ such that $\lim_{|i|\to\infty} v(a_i) =0$. We use $\|f\|$ to denote the Gauss norm on $K\langle x\rangle$. Let $K\llangle x\rrangle$ denote the ring of \emph{overconvergent} power series over $K$, which corresponds to the subring of $K\langle x\rangle$ consisting of those power series that converge on some ball of valuation radius strictly bigger than 1 centered at $\mathbf{0}\in K^n$. We let $\mL_{\ov(K)}$, the language of \emph{overconvergent $K$-analytic structure}, be the language $\mathcal{L}_{\Div}$ together with symbols for $^{-1}$ and for every element $f\in K\llangle x\rrangle$ for all finite tuples of variables $x=(x_1,\ldots,x_n)$. The structure $(K,\mathcal{L}_{\ov(K)})$ is defined by interpreting $\mathcal{L}_{\Div}$ and $^{-1}$ in the natural way and each $f\in K\llangle x\rrangle$ with $x=(x_1,\ldots,x_n)$ as the function
\[
f^{\mathcal{L}_{\ov(K)}}(x):=
\begin{cases}
f(x) & \text{ if $x\in (K^\circ)^n$} \\ 
0 & \text{otherwise.}
\end{cases}
\]

As before, we let $\mL_{\ov(K)}^{\leqslant}$ to be the extension of $\mL_{\ov(K)}$ by the binary relation $\leqslant$. 

\begin{remark}\label{rem:over_sep} There is a separated analytic structure $\mathcal{A}$ on $K$ such that every $\mL_{\ov(K)}^{\leqslant}$-definable set $X\subseteq K^m$ is $\mL_{\cA}^{\leqslant}$-definable. See in particular the separated overconvergent structure provided in \cite[Section 4.4, Example 10]{CLip}. 
\end{remark} 

\begin{corollary}\label{cor:overwo} The structure $(K,  \mL_{\ov(K)}^{\leqslant})$ is weakly o-minimal. 
\end{corollary}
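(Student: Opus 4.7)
The plan is to deduce this corollary directly from the weak o-minimality result already proved in the separated analytic setting (Theorem \ref{thm:weak-o-min_an}) by transferring definability along the passage described in Remark \ref{rem:over_sep}. Since the conclusion concerns only the structure $(K,\mL_{\ov(K)}^{\leqslant})$ itself (not its full theory), it suffices to show that every subset of $K$ which is definable in $\mL_{\ov(K)}^{\leqslant}$ is a finite union of points and convex sets.

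The key step is the reduction. Given an $\mL_{\ov(K)}^{\leqslant}$-definable set $X\subseteq K$, Remark \ref{rem:over_sep} produces a separated analytic $\cA$-structure on $K$ (namely the one from \cite[Section 4.4, Example 10]{CLip}) such that $X$ is $\mL_{\cA}^{\leqslant}$-definable. Theorem \ref{thm:weak-o-min_an} then tells us that $\Th(K,\mL_{\cA}^{\leqslant})$ is weakly o-minimal; in particular $(K,\mL_{\cA}^{\leqslant})$ itself is weakly o-minimal, so $X$ is a finite union of points and convex sets. Since $X$ was an arbitrary $\mL_{\ov(K)}^{\leqslant}$-definable subset of $K$, this yields the corollary.

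There is no genuine obstacle once Remark \ref{rem:over_sep} is available: the entire content of the corollary is packaged in the two cited ingredients. The only thing one should double-check is that the separated analytic structure provided in \cite[Section 4.4, Example 10]{CLip} really does absorb every overconvergent function on $(K^\circ)^n$ as a term in $\mL_{\cA}$, so that arbitrary $\mL_{\ov(K)}^{\leqslant}$-formulas (and not just those built from a restricted class of overconvergent series) translate into $\mL_{\cA}^{\leqslant}$-formulas. Given this translation, the rank-one completeness of $K$ plays no further role in the argument; it enters only to ensure that the overconvergent power series in $K\llangle x\rrangle$ genuinely define functions on $(K^\circ)^n$ and that the separated structure of \emph{loc. cit.} is well defined.
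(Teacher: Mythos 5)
Your argument is exactly the paper's proof: the corollary is obtained by combining Remark \ref{rem:over_sep} (every $\mL_{\ov(K)}^{\leqslant}$-definable set is $\mL_{\cA}^{\leqslant}$-definable for a suitable separated structure) with Theorem \ref{thm:weak-o-min_an}. Your added caveat about the separated structure absorbing all overconvergent functions is precisely what the cited Example 10 of \cite[Section 4.4]{CLip} guarantees, so there is nothing further to check.
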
 

\begin{proof} This follows from Remark \ref{rem:over_sep} and Theorem \ref{thm:weak-o-min_an}.  
\end{proof}

Even though we know that the structure $(K,  \mL_{\ov(K)}^{\leqslant})$ is weakly o-minimal, the proof given above passes through a proper expansion of $\mL_{\ov(K)}$ to show this. It is thus natural to ask whether $(K,\mL_{\ov(K)})$ has quantifier elimination. We show this is the case, following with minor modification the argument for quantifier elimination in \cite{lip-rob-05}. We will need the following additional lemma. 

\begin{lemma}\label{lem:positive} Let $f\in K\llangle x_1,\ldots,x_n\rrangle$ be such that $\Vert f-1\Vert<\Vert f\Vert =1$. Then $0<f(x)$ for all $x\in (K^{\circ})^n$. 
\end{lemma}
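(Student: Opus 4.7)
The plan is to write $f = 1 + g$ with $g := f - 1$, show that $g(x)$ lies in the maximal ideal $K^{\circ\circ}$ for every $x \in (K^\circ)^n$, and then exploit convexity of the valuation to conclude $f(x) = 1 + g(x) > 0$.

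First, from $\Vert g\Vert < 1$ I read off that every coefficient $a_\alpha$ of $g = \sum_\alpha a_\alpha x^\alpha$ satisfies $v(a_\alpha) \leqslant \Vert g\Vert < 1$, so $a_\alpha \in K^{\circ\circ}$. For $x \in (K^\circ)^n$ each monomial satisfies $v(a_\alpha x^\alpha) = v(a_\alpha)\prod_i v(x_i)^{\alpha_i} \leqslant v(a_\alpha) \leqslant \Vert g\Vert$. Since $g$ is overconvergent and $K$ is rank-one complete, the series converges on $(K^\circ)^n$, and the ultrametric inequality passes to the limit to give $v(g(x)) \leqslant \Vert g\Vert < 1$. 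Hence $g(x) \in K^{\circ\circ}$.

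Next I use the standard fact that, because the valuation on an ordered valued field is convex, the maximal ideal is a convex subset of $K$ contained in the open interval $(-1,1)$. Indeed, for $z \in K^{\circ\circ}$ with $z \geqslant 0$, if $z \geqslant 1$ then $0 < z^{-1} \leqslant 1$ would lie in $K^\circ$ by convexity of $K^\circ$ (since $0, 1 \in K^\circ$), forcing $v(z^{-1}) \leqslant 1$ and so $v(z) \geqslant 1$, contradicting $z \in K^{\circ\circ}$; the case $z < 0$ is symmetric.

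Combining the two observations, $g(x) \in K^{\circ\circ} \subseteq (-1,1)$, so $g(x) > -1$ and therefore $f(x) = 1 + g(x) > 0$ for every $x \in (K^\circ)^n$. I do not anticipate any real obstacle: the only delicate point is the inclusion $K^{\circ\circ} \subseteq (-1,1)$, and this is an immediate consequence of convexity of the valuation ring.
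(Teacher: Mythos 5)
Your proof is correct and takes essentially the same approach as the paper: both arguments reduce to showing that $f(x)$ differs from a positive unit by an element of $K^{\circ\circ}$ and then invoke convexity of the valuation ring to preserve the sign. The only cosmetic difference is that you centre at $1$ (showing $f(x)-1\in K^{\circ\circ}$ directly), whereas the paper first centres at the constant coefficient $a_{\mathbf 0}$ and notes $a_{\mathbf 0}>0$; the content is identical.
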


\begin{proof}
Let $f(x)=\sum_{\nu\in \N^{n}} a_\nu x^\nu$. Let $\mathbf{0}$ denote the constant zero sequence in $\mathbb{N}^n$. Since $\Vert f-1\Vert<\Vert f\Vert$, we have both that $v(a_\mathbf{0}-1)<v(a_\mathbf{0})=1$ and that for every $i\neq \mathbf{0}$, $v(a_i)<1$. In particular, $a_\mathbf{0}-1\in K^{\circ\circ}$ and therefore $0<(a_0-1)+1=a_0$.  A similar argument applies for every $x\in (K^{\circ})^{n}$. Indeed, we have that $v(f(x)-a_0)<1$. Hence $f(x)-a_0\in K^{\circ\circ}$ which shows that $0<(f(x)-a_0)+a_0=f(x)$.  
\end{proof}

%
%

\begin{theorem}\label{thm:QE_ov} The theory $Th(K,\mL_{\ov(K)}^{\leqslant})$ has quantifier elimination. 
\end{theorem}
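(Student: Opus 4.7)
The plan is to follow the strategy of \cite{lip-rob-05}, where quantifier elimination is proved for overconvergent analytic structure in the henselian setting, and to incorporate sign conditions by means of Lemma \ref{lem:positive}. As in the proof of Theorem \ref{thm:QE_an}, I would first reduce to a Basic Lemma asserting that every formula of the form $\exists y\, \varphi(x,y)$, with $y$ a single variable ranging over $K^\circ$ and $\varphi$ a quantifier-free $\mL_{\ov(K)}^{\leqslant}$-formula in which $^{-1}$ is applied only to terms not involving $y$, is equivalent to a quantifier-free $\mL_{\ov(K)}^{\leqslant}$-formula $\psi(x)$. Standard manipulations (elimination of $^{-1}$, reduction to disjunctive normal form) allow us to assume that $\varphi$ is a conjunction of atomic formulas of the shape $f_i(x,y)=0$, $0<f_i(x,y)$, $v(f_i(x,y))=1$, or $v(f_i(x,y))<v(f_j(x,y))$, where each $f_i(x,y)$ is an overconvergent term.

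Following \cite{lip-rob-05}, I would then apply Weierstrass preparation for overconvergent power series. After a finite case split according to which coefficient of $f_i$ viewed as a series in $y$ realises the Gauss norm (organized by an induction on a suitable complexity measure, as in that reference), we obtain a factorization $f_i(x,y)=U_i(x,y)\cdot P_i(x,y)$, where $U_i\in K\llangle x,y\rrangle$ is an overconvergent unit satisfying $\Vert U_i-1\Vert<\Vert U_i\Vert=1$, and $P_i(x,y)$ is a polynomial in $y$ whose coefficients are overconvergent terms in $x$.

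At this point Lemma \ref{lem:positive} intervenes to handle the ordering: it guarantees that $U_i(x,y)>0$ on $(K^\circ)^{|x|+1}$, and clearly $v(U_i(x,y))=1$ there. The atomic subformulas therefore transform cleanly as $f_i=0\Leftrightarrow P_i=0$, $0<f_i\Leftrightarrow 0<P_i$, $v(f_i)=1\Leftrightarrow v(P_i)=1$, and $v(f_i)<v(f_j)\Leftrightarrow v(P_i)<v(P_j)$. The resulting existential formula involves only polynomial expressions in $y$ whose coefficients are overconvergent terms in $x$, together with $\mL_{\Div}^{\leqslant}$-predicates; Theorem \ref{thm:QE} (quantifier elimination for $\RCVF$) then produces an equivalent quantifier-free $\mL_{\Div}^{\leqslant}$-formula in those coefficient terms, which is a quantifier-free $\mL_{\ov(K)}^{\leqslant}$-formula in $x$.

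The main obstacle is the combinatorial bookkeeping in the Weierstrass preparation step: one must verify that the inductive scheme of \cite{lip-rob-05} terminates after finitely many cases and that each case admits a preparation whose unit satisfies the strict inequality $\Vert U_i-1\Vert<1$ required to invoke Lemma \ref{lem:positive}. This is precisely where the overconvergent framework is stronger than the merely convergent one: since overconvergent series converge on a ball of radius strictly greater than $1$, their Weierstrass preparations produce units with strictly contracted tail terms, yielding the required inequality. The non-sign portion of the argument transfers from \cite{lip-rob-05} without essential modification; the genuinely new content in the real closed setting is the translation of sign conditions provided by Lemma \ref{lem:positive}.
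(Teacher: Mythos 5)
Your proposal is correct and follows essentially the same route as the paper: both reduce, via the Weierstrass preparation scheme of Lipshitz--Robinson's overconvergent quantifier elimination, to atomic conditions on products of a positive unit (handled by Lemma \ref{lem:positive}) and a term polynomial in the last variable, and then invoke Theorem \ref{thm:QE} to eliminate that variable by induction. The paper writes the prepared form as $f_i = a_i(x)E_i(x,y)F_i(x,y)$ and inducts on the number of $y$-variables rather than isolating a one-variable Basic Lemma, but this is only a cosmetic difference.
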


\begin{proof} The proof follows almost word for word the strategy in \cite[Theorem 2.1]{LipRob}. Let $\varphi(x,y)$ be a quantifier free $\mL_{\ov(K)}^{\leqslant}$-formula with $x=(x_1,\ldots,x_m)$ and $y=(y_1,\ldots,y_n)$. We will show that there is a quantifier free $\mL_{\ov(K)}^{\leqslant}$-formula $\psi(x)$ which is equivalent to $\exists y \varphi(x,y)$. By standard reductions and the steps used in \cite[Theorem 2.1]{LipRob}\footnote{Remark that in \cite[Theorem 2.1]{LipRob}, $t$ can be replaced by any element $c\in K$ such that $0<v(c)<1$.}, one is left with the following situation: the formula $\varphi$ is of the form $\varphi_0(x)\wedge \varphi_1(x,y)$, where $\varphi_1(x,y)$ is of the form
\[
\bigwedge_{i\in I_1} (f_i(x,y)=0) \wedge \bigwedge_{i\in I_2} (0<f_i(x,y))\wedge \bigwedge_{i\in I_3} (v(f_i(x,y))=1) \wedge \bigwedge_{i\in I_4, j\in I_5} v(f_i(x,y))<v(f_j(x,y))
\]
where 
all variables range over the valuation ring, each $f_i$ is of the form 
\[
f_i = a_{i} (x) E_i(x,y)F_i(x,y),
\]
with $a_{i}(x)\in K\llangle x\rrangle$, $F_i(x,y) \in K\llangle x,y_1,\ldots,y_{n-1}\rrangle[y_n]$ and $E_i(x,y) K\llangle x,y\rrangle$ is a unit such that $\Vert E_i -1\Vert< \Vert E_i\Vert $. By Lemma \ref{lem:positive}, $E(x,y)>0$ for all $(x,y)\in (K^{\circ})^{m+n}$. Hence, we have that for each $i, j\in \bigcup_{s=1}^5 I_s$
\begin{align*}
0 \ \square  \ f_i(x,y) & \Leftrightarrow 0 \ \square \ a_{i\mu_i}(x) F_i(x,y), \\
v(f_i(x,y)) = 1 & \Leftrightarrow v(a_{i\mu_i}(x) F_i(x,y)) = 1, \\
v(f_i(x,y)) < v(f_j(x,y)) & \Leftrightarrow v(a_{i\mu_i}(x) F_i(x,y)) < v(a_{j\mu}(x) F_j(x,y)), 
\end{align*}
where  $\square \in \{=,<\}$. Hence, by Theorem \ref{thm:QE}, we can eliminate the quantifier $\exists y_n$. The result follows by induction on $n$.  
\end{proof}

\subsection*{Acknowledgements} We would like to thank Raf Cluckers for encouraging discussions around the topics of the article.


\begin{thebibliography}{10}

\bibitem{CD}
Gregory Cherlin and Max~A. Dickmann.
\newblock Real closed rings ii. model theory.
\newblock {\em Annals of Pure and Applied Logic}, 25(3):213 -- 231, 1983.

\bibitem{clu-2003}
R.~Cluckers.
\newblock Analytic $p$-adic cell decomposition and integrals.
\newblock {\em Transactions of the American Mathematical Society},
  356(4):1489--1499, 2004.

\bibitem{CLip}
R.~Cluckers and L.~Lipshitz.
\newblock Fields with analytic structure.
\newblock {\em J. Eur. Math. Soc. (JEMS)}, 13:1147--1223, 2011.
\newblock math.LO/0610666.

\bibitem{clr-06}
R.~Cluckers, L.~Lipshitz, and Z.~Robinson.
\newblock Analytic cell decomposition and analytic motivic integration.
\newblock {\em Ann. Sci. {\'E}cole Norm. Sup. (4)}, 39(4):535--568, 2006.

\bibitem{CLR}
R.~Cluckers, L.~Lipshitz, and Z.~Robinson.
\newblock Real closed fields with non-standard and standard analytic structure.
\newblock {\em J. Lond. Math. Soc. (2)}, 78(1):198--212, 2008.

\bibitem{DD}
J.~Denef and Lou van~den Dries.
\newblock {$p$}-adic and real subanalytic sets.
\newblock {\em Ann. of Math. (2)}, 128(1):79--138, 1988.

\bibitem{L}
L.~Lipshitz.
\newblock Rigid subanalytic sets.
\newblock {\em American Journal of Mathematics}, 115(1):77--108, Feb. 1993.

\bibitem{lip-rob-98}
L.~Lipshitz and Z.~Robinson.
\newblock One-dimensional fibers of rigid subanalytic sets.
\newblock {\em The Journal of Symbolic Logic}, 63:83--88, March 1998.

\bibitem{lip-rob-05}
L.~Lipshitz and Z.~Robinson.
\newblock Uniform properties of rigid subanalytic sets.
\newblock {\em Trans. Amer. Math. Soc.}, 357(11):4349--4377 (electronic), 2005.

\bibitem{LipRob}
L.~Lipshitz and Z.~Robinson.
\newblock Overconvergent real closed quantifier elimination.
\newblock {\em Bull. London Math. Soc.}, 38(6):897--906, 2006.

\bibitem{Lip-Rob-2000}
Leonard Lipshitz and Zachary Robinson.
\newblock Rings of separated power series and quasi-affinoid geometry.
\newblock {\em Ast\'{e}risque}, (264):vi+171, 2000.

\bibitem{schoutens}
Hans Schoutens.
\newblock Rigid subanalytic sets.
\newblock {\em Compositio Math.}, 94(3):269--295, 1994.

\bibitem{vdDHM}
Lou van~den Dries, D.~Haskell, and D.~Macpherson.
\newblock One-dimensional $p$-adic subanalytic sets.
\newblock {\em Journal of the London Mathematical Society}, 59(1):1--20, 1999.

\bibitem{lewen-vdD}
Lou van~den Dries and Adam~H. Lewenberg.
\newblock {$T$}-convexity and tame extensions.
\newblock {\em J. Symbolic Logic}, 60(1):74--102, 1995.

\bibitem{Mac-Mar-Dries1}
Lou van~den Dries, Angus Macintyre, and David Marker.
\newblock The elementary theory of restricted analytic fields with
  exponentiation.
\newblock {\em Ann. of Math. (2)}, 140(1):183--205, 1994.

\bibitem{Mac-Mar-Dries3}
Lou van~den Dries, Angus Macintyre, and David Marker.
\newblock Logarithmic-exponential power series.
\newblock {\em J. London Math. Soc. (2)}, 56(3):417--434, 1997.

\bibitem{Mac-Mar-Dries2}
Lou van~den Dries, Angus Macintyre, and David Marker.
\newblock Logarithmic-exponential series.
\newblock In {\em Proceedings of the {I}nternational {C}onference ``{A}nalyse
  \& {L}ogique'' ({M}ons, 1997)}, volume 111, pages 61--113, 2001.

\end{thebibliography}
\end{document}